\newcommand{\cN}{\mathcal{N}}
\newcommand{\bR}{\mathbb{R}}
\newcommand{\cE}{\mathcal{E}}
\newcommand{\gam}[1]{\Gamma_{\text{#1}}}
\newcommand{\adm}{\text{Adm}}
\newcommand{\convt}{\text{co}}
\newcommand{\dom}{\text{Dom}}
\newcommand{\ext}{\text{ext}}
\newcommand{\Mod}{\text{Mod}}
\DeclareMathOperator{\supp}{supp}
\newcommand{\rv}[1]{\underline{#1}}
\newcommand{\bfec}{\Gamma_{\overline{\text{fec}}}}
\newcommand{\modeq}{\simeq}
\DeclareMathOperator*{\argmin}{arg\,min}
\newtheorem{theorem}{Theorem}[section]
\newtheorem{corollary}[theorem]{Corollary}
\newtheorem{lemma}[theorem]{Lemma}
\newtheorem{proposition}[theorem]{Proposition}
\theoremstyle{definition}
\newtheorem{definition}[theorem]{Definition}
\theoremstyle{remark}
\newtheorem{remark}{Remark}
\newtheorem{example}{Example}
\title{Modulus of edge covers and stars}
\author{Adriana Ortiz-Aquino\thanks{\protect\url{adrianaortiz@ksu.edu}}\ \ and Nathan Albin\thanks{\protect\url{albin@k-state.edu}}}
\date{
{\small Department of Mathematics, Kansas State University, Manhattan, KS 66506, USA. }\\[2ex]
February 29, 2024
}
\begin{document}

\maketitle

\begin{abstract}
This paper explores the modulus (discrete $p$-modulus) of the family of edge covers on a discrete graph.  This modulus is closely related to that of the larger family of fractional edge covers; the modulus of the latter family is guaranteed to approximate the modulus of the former within a multiplicative factor. The bounds on edge cover modulus can be computed efficiently using a duality result that relates the fractional edge covers to the family of stars.
\end{abstract}

%%%%%%%%%%%%%%%%
\section{Introduction}\label{sec:introduction}

The discrete $p$-modulus is a versatile and informative tool for studying many families of objects on graphs~\cite{albin2017modulus, albin2015modulus}. For example, the modulus of all paths connecting two nodes in a graph is related to known graph theoretic quantities such as shortest path, max flow/min cut, and effective resistance~\cite{albin2017modulus,shakeri2016generalized}. It has also been shown that the modulus of all cycles can be used for clustering and community detection~\cite{shakeri2017network} and that the modulus of all spanning trees can be used to describe a hierarchical decomposition of the graph~\cite{albin2020spanning,kottegoda2020spanning,albin2021fairest}. In general, modulus can be adapted to any family of graph objects and will measure the richness of that family; larger families of objects tend to have larger modulus. This paper considers the modulus of edge covers and gives an approximation to its value using the modulus of the family of fractional edge covers. 

As described in Section~\ref{sec:defs} in more detail, the modulus problem is a convex optimization problem with a number of constraints determined by the number of elements in the family of graph objects under consideration. Thus, the modulus of a combinatorially large family, like the family of edge covers, can be difficult to compute directly. Through the theory of Fulkerson duality, it has been shown that every family of objects has a corresponding dual family, whose modulus is closely related to the modulus of the original family \cite{fulkerson1968blocking, albin2019blocking}. We prove in this paper that the dual family of fractional edge covers is the family of stars, which greatly reduces the number of constraints for the $p$-modulus problem. In this way, we can calculate the modulus of fractional edge covers using the modulus of stars, and then obtain a bound for the modulus of edge covers.

A probabilistic interpretation for the $p$-modulus problem has also proven valuable in some cases~\cite{albin2016prob, albin2021convergence}. This allows us to reinterpret the modulus problem as an optimization problem related to random objects. Specifically, it has been shown that solving the modulus problem is equivalent to finding a probability mass function (pmf) on the family of objects that minimizes a function of certain expectations on the edges. 

The primary contributions of this paper are the following.
\begin{itemize}
    \item Section~\ref{sec:defs} introduces a new equivalence relation of families of graph objects with respect to modulus. 
    \item Definition~\ref{def:bfec} introduces the concept of basic fractional edge covers.
    \item Lemma~\ref{lem:extreme-bfec} characterizes the set of extreme points of fractional edge covers by relating them to basic fractional edge covers. 
    \item Theorem~\ref{thm:bounds} provides an estimate of the modulus of edge covers using the modulus of fractional edge covers. 
    \item Lemma~\ref{lem:Bhat_fec} shows that the dual blocking family of fractional edge covers is equivalent to the family of stars, which allows the modulus of fractional edge covers to be computed more efficiently.
\end{itemize}

This paper is organized as follows. Section \ref{sec:defs} introduces definitions and notations. Section \ref{sec:mod_ec_fec} defines the modulus problem for the family of edge covers and the family of fractional edge covers. Section \ref{sec:fulkerson_duality} reviews the concepts of blocking duality and proves that the dual family of fractional edge covers is the family of stars. Section \ref{sec:star_mod} explores the star modulus problem, reviews the probabilistic interpretation of modulus, and demonstrates these concepts through examples on several standard graphs. Section \ref{sec:num-examples} explores some of the fundamental differences between edge cover modulus and fractional edge cover modulus through two additional examples. Section \ref{sec:discussion} ends with a discussion of these concepts and future work. 

\section{Definitions and Notation}\label{sec:defs}

\paragraph{Objects and usage.} Let $G = (V,E,\sigma)$ be an undirected graph with vertex set $V$, edge set $E$, and a positive vector $\sigma\in\mathbb{R}^E_{>0}$ that assigns to each edge, $e\in E$, a positive weight, $\sigma(e)$. Let $\Gamma$ be a \emph{family of objects} on $G$. The concept of \emph{object} is very flexible. The present paper is focused primarily on three specific families: the families of stars, edge covers, and fractional edge covers. Each of these families is defined below. 

The family $\Gamma$ is associated with a nonnegative \emph{usage matrix}, $\mathcal{N}\in\mathbb{R}_{\ge 0}^{\Gamma\times E}$, where $\mathcal{N}(\gamma,e)$ indicates the degree to which the object $\gamma\in\Gamma$ ``uses'' the edge $e\in E$. When $\Gamma$ consists of subsets of $E$, a natural choice for $\mathcal{N}$ is the indicator function
\begin{equation}\label{eq:natural-N}
    \mathcal{N}(\gamma,e) = \mathbbm{1}_{\gamma}(e) := 
    \begin{cases} 1 & \text{if }e \in \gamma, \\ 0 & \text{otherwise}. \end{cases}
\end{equation}
For the family of fractional edge covers, however, $\mathcal{N}$ will be allowed to take other real values. In this paper, we restrict our attention to families that are \emph{nontrivial} in the sense that each row of $\mathcal{N}$ contains at least one nonzero entry. Note that we refer to $\mathcal{N}$ as a matrix, even if $\Gamma$ is an infinite set.

The usage matrix provides a useful representation of the objects in $\gamma$; by associating each $\gamma$ with the corresponding row vector, $\mathcal{N}(\gamma,\cdot)$ in the usage matrix, we may view a family of objects as a subset of the nonnegative vectors $\mathbb{R}^E_{\ge 0}$. Given a family of objects, $\Gamma\subseteq\mathbb{R}^E_{\ge 0}$, it is often useful to consider its \emph{convex hull}, $\convt(\Gamma)$, as well as its \emph{dominant},
\begin{equation*}
    \dom(\Gamma) = \convt(\Gamma) + \mathbb{R}^{E}_{\ge 0}.
\end{equation*}

\paragraph{Stars, edge covers, and fractional edge covers.}

To each vertex, $v\in V$, is associated the \emph{star}, $\delta(v)\subset E$, comprising the set of edges incident to $v$. The family, $\gam{star}$, of all stars in $G$ is endowed with the natural usage matrix~\eqref{eq:natural-N}. Any subset of a star is called a \emph{substar}.

An \emph{edge cover} of $G$ is a set of edges $C\subset E$ such that each vertex in $G$ is incident to at least one edge in $C$, that is $|C\cap\delta(v)|\ge 1$ for every $v\in V$. (If the intersection is exactly 1 for all vertices, the edge cover is called a \emph{perfect matching}.) The family of all edge covers is denoted $\gam{ec}$ and is also endowed with the natural usage matrix~\eqref{eq:natural-N}.

The concept of edge cover can be generalized as follows (see~\cite{schrijver2003combinatorial}). Let $\gamma\in\mathbb{R}_{\ge 0}^E$ be a nonnegative vector on $E$. If
\begin{equation*}
    \gamma(\delta(v)) :=
    \sum_{e\in\delta(v)}\gamma(e)\ge 1\quad\text{for all }v\in V,
\end{equation*}
then $\gamma$ is called a \emph{fractional edge cover}. Each such $\gamma$ can be considered to be an object on the graph $G$ with corresponding edge usage
\begin{equation*}
    \mathcal{N}(\gamma,\cdot) := \gamma(\cdot),
\end{equation*}
yielding the (uncountably infinite) family of fractional edge covers, $\gam{fec}$. Each edge cover, $\gamma\in\gam{ec}$, can be associated with its incidence vector $\mathbbm{1}_\gamma$, which provides a natural inclusion $\gam{ec}\subset\gam{fec}$. (The reverse inclusion is only true on the trivial graph.)

\paragraph{Densities and admissibility.}

A nonnegative vector, $\rho\in\mathbb{R}^E_{\ge 0}$, is called a \emph{density}. Each density induces a measure of length, called the \emph{$\rho$-length}, on the objects in $\Gamma$. Given a density $\rho$ and an object $\gamma\in\Gamma$, the $\rho$-length of $\gamma$, $\ell_\rho(\gamma)$, is defined as
\[
\ell_{\rho}(\gamma) := \sum_{e \in E} \cN(\gamma, e)\rho(e)= \rho^T\gamma,
\]
where the last equality arises from identifying $\gamma$ with its row in the matrix $\mathcal{N}$.

 A density, $\rho$, is called \emph{admissible for $\Gamma$}, or simply \emph{admissible}, if $ \ell_{\rho}(\gamma) \ge 1 $ for every $\gamma \in \Gamma$. The set of all admissible densities for a family $\Gamma$ is denoted as 
\[ 
\adm(\Gamma) := \{ \rho \in \bR_{\ge 0}^E : \ell_{\rho}(\gamma) \ge 1, \forall \gamma \in \Gamma\}.
\]

\begin{lemma}\label{lem:adm-monotone}
    Suppose $\Gamma_1$ and $\Gamma_2$ be two families of objects satisfying $\Gamma_1\subseteq\Gamma_2$, then $\adm(\Gamma_2)\subseteq\adm(\Gamma_1)$.
\end{lemma}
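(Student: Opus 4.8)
The plan is to prove this directly from the definition of admissibility, since the inclusion is essentially tautological. The key observation is that being admissible for a family $\Gamma$ is a condition that must hold \emph{pointwise} over all objects in $\Gamma$, so shrinking the family can only weaken the constraint set and hence enlarge (or preserve) the admissible set.

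Concretely, I would start by fixing an arbitrary density $\rho \in \adm(\Gamma_2)$. By definition this means $\ell_\rho(\gamma) = \rho^T\gamma \ge 1$ for every $\gamma \in \Gamma_2$. Now, since $\Gamma_1 \subseteq \Gamma_2$, every object $\gamma \in \Gamma_1$ is in particular an element of $\Gamma_2$, so the inequality $\ell_\rho(\gamma) \ge 1$ holds for all $\gamma \in \Gamma_1$ as well. Together with $\rho \in \mathbb{R}^E_{\ge 0}$ (which is part of the hypothesis $\rho \in \adm(\Gamma_2)$), this is exactly the statement that $\rho \in \adm(\Gamma_1)$. Since $\rho$ was arbitrary, $\adm(\Gamma_2) \subseteq \adm(\Gamma_1)$.

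There is no real obstacle here; the only thing worth being careful about is making explicit that the nonnegativity condition $\rho \in \mathbb{R}^E_{\ge 0}$ carries over trivially, so that one genuinely lands back in $\adm(\Gamma_1)$ and not merely in the set of vectors satisfying the length constraints. One could also remark that the same argument shows the statement would remain true if the usage matrix $\mathcal{N}$ for $\Gamma_1$ were the restriction of the one for $\Gamma_2$ to the rows indexed by $\Gamma_1$, which is the situation in all the applications in this paper.
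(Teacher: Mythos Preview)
Your proof is correct and follows exactly the approach the paper takes; the paper's own proof simply says ``This is evident from the definitions. Any density that is admissible for the larger family, $\Gamma_2$, is necessarily admissible for $\Gamma_1$,'' and your argument is the natural unpacking of that sentence.
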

\begin{proof}
    This is evident from the definitions. Any density that is admissible for the larger family, $\Gamma_2$, is necessarily admissible for $\Gamma_1$.
\end{proof}

\paragraph{Energy, modulus, and extremal densities.} For $1\le p\le \infty$, the \emph{$p$-energy} of a density $\rho$ is defined as
\begin{equation*}
    \mathcal{E}_{p,\sigma}(\rho) :=
    \begin{cases}
        \sum\limits_{e\in E}\sigma(e)\rho(e)^p & \text{if }1\le p < \infty,\\
        \max\limits_{e\in E}\sigma(e)\rho(e) & \text{if }p=\infty.
    \end{cases}
\end{equation*}
The \emph{$p$-modulus} of $\Gamma$ is then defined as
\begin{equation}\label{eq:mod}
    \Mod_{p,\sigma}(\Gamma) := \inf_{\rho \in \adm(\Gamma)} \cE_{p,\sigma}(\rho).
\end{equation}
If the graph is unweighted, we define $\sigma \equiv 1$ and adopt the simplified notation $\cE_p(\rho)$ and $\Mod_p(\Gamma)$.

In the case that $\Gamma$ is finite, modulus can be expressed as a convex optimization problem of the form
\begin{equation}\label{eq:optimization_prob}
\begin{split}
    \text{minimize} & \quad \cE_{p,\sigma}(\rho) \\
    \text{subject to} & \quad \rho \succeq 0 \\
                      & \quad \cN \rho \succeq \mathbf{1}. \\ 
\end{split}
\end{equation}
The notation $\succeq$ indicates elementwise comparison and $\mathbf{1}$ indicates the appropriately shaped vector of all ones.

A density $\rho \in \adm(\Gamma)$ is said to be \emph{extremal} if $\cE_{p,\sigma}(\rho) = \Mod_{p,\sigma}(\Gamma)$. The notation $\rho^*$ is commonly used to denote an extremal density. If $\Gamma$ is finite, then \cite[Theorem 4.1]{albin2017modulus} implies that an extremal density exists. Moreover, it is unique for $1<p<\infty$. A useful property of modulus comes from Proposition 3.4 in \cite{albin2017modulus}, which is a direct consequence of Lemma~\ref{lem:adm-monotone}.

\begin{proposition}[Monotonicity]
    Let $\Gamma_1$ and $\Gamma_2$ be families of graph objects. If $\Gamma_1 \subseteq \Gamma_2$, then $\Mod_{p,\sigma}(\Gamma_1) \le \Mod_{p,\sigma}(\Gamma_2)$.
\end{proposition}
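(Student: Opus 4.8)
The plan is to derive this directly from Lemma~\ref{lem:adm-monotone}, since the inclusion $\Gamma_1\subseteq\Gamma_2$ is exactly the hypothesis of that lemma. First I would invoke Lemma~\ref{lem:adm-monotone} to obtain $\adm(\Gamma_2)\subseteq\adm(\Gamma_1)$: every density that assigns $\rho$-length at least $1$ to all objects in the larger family certainly does so for all objects in the smaller one.

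Next I would recall that, by definition~\eqref{eq:mod}, $\Mod_{p,\sigma}(\Gamma_i)=\inf_{\rho\in\adm(\Gamma_i)}\cE_{p,\sigma}(\rho)$ for $i=1,2$. The key (and essentially only) observation is the elementary fact that taking an infimum of a fixed functional over a smaller set cannot decrease its value: if $A\subseteq B$ then $\inf_{x\in B} f(x)\le\inf_{x\in A} f(x)$. Applying this with $A=\adm(\Gamma_2)$, $B=\adm(\Gamma_1)$, and $f=\cE_{p,\sigma}$ yields
\[
\Mod_{p,\sigma}(\Gamma_1)=\inf_{\rho\in\adm(\Gamma_1)}\cE_{p,\sigma}(\rho)\ \le\ \inf_{\rho\in\adm(\Gamma_2)}\cE_{p,\sigma}(\rho)=\Mod_{p,\sigma}(\Gamma_2),
\]
which is the claim.

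There is no real obstacle here; the statement is a formal consequence of the monotonicity of admissibility. The only point worth a remark is the degenerate case in which an admissible set is empty (so the corresponding modulus is $+\infty$): if $\adm(\Gamma_2)=\emptyset$ the inequality is vacuous, and if $\adm(\Gamma_1)=\emptyset$ then the inclusion forces $\adm(\Gamma_2)=\emptyset$ as well, so both sides equal $+\infty$. In the finite, nontrivial setting of this paper the admissible sets are always nonempty, so this subtlety does not arise.
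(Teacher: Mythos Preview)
Your proof is correct and is exactly the argument the paper intends: the proposition is stated as a direct consequence of Lemma~\ref{lem:adm-monotone}, and you spell out that consequence precisely by combining the inclusion of admissible sets with the monotonicity of the infimum in~\eqref{eq:mod}. Nothing further is needed.
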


\paragraph{Equivalent families.} Two families of objects, $\Gamma$ and $\Gamma'$ are called \emph{equivalent} (in the sense of modulus) if $\adm(\Gamma)=\adm(\Gamma')$. We shall use the notation $\Gamma\modeq\Gamma'$ to indicate that the two families are equivalent in this sense. In light of~\eqref{eq:mod}, this implies that $\Mod_{p,\sigma}(\Gamma)=\Mod_{p,\sigma}(\Gamma')$ for any choice of the parameter $p$ and weights $\sigma$; equivalent families are indistinguishable in the context of modulus.

One straightforward example of equivalence comes from the following lemma.

\begin{lemma}\label{lem:gamma-equiv-dom}
    Let $\Gamma$ be a family of objects on $G$. Then $\Gamma\modeq\dom(\Gamma)$.
\end{lemma}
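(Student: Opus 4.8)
The plan is to show the two admissible sets coincide by proving mutual inclusion, i.e.\ $\adm(\dom(\Gamma))=\adm(\Gamma)$. One inclusion is immediate from monotonicity: since $\Gamma\subseteq\dom(\Gamma)$, Lemma~\ref{lem:adm-monotone} gives $\adm(\dom(\Gamma))\subseteq\adm(\Gamma)$. The real content is the reverse inclusion, $\adm(\Gamma)\subseteq\adm(\dom(\Gamma))$, so suppose $\rho\in\adm(\Gamma)$ and take an arbitrary element $\eta\in\dom(\Gamma)$; we must check $\ell_\rho(\eta)=\rho^T\eta\ge 1$.

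Next I would unfold the definition $\dom(\Gamma)=\convt(\Gamma)+\bR^E_{\ge 0}$: any $\eta\in\dom(\Gamma)$ can be written as $\eta=\xi+w$ with $\xi\in\convt(\Gamma)$ and $w\in\bR^E_{\ge 0}$, and $\xi=\sum_{i} \lambda_i\gamma_i$ a finite convex combination of objects $\gamma_i\in\Gamma$ (with $\lambda_i\ge 0$, $\sum_i\lambda_i=1$). Then, using $\rho\succeq 0$ and $w\succeq 0$, compute
\[
\rho^T\eta = \rho^T\xi + \rho^T w \ge \rho^T\xi = \sum_i \lambda_i\,\rho^T\gamma_i = \sum_i \lambda_i\,\ell_\rho(\gamma_i)\ge \sum_i \lambda_i\cdot 1 = 1,
\]
where the last inequality uses $\rho\in\adm(\Gamma)$, which forces $\ell_\rho(\gamma_i)\ge 1$ for each $i$. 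This shows $\rho\in\adm(\dom(\Gamma))$, completing the reverse inclusion and hence the equality $\adm(\Gamma)=\adm(\dom(\Gamma))$, which is exactly the definition of $\Gamma\modeq\dom(\Gamma)$.

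There is no serious obstacle here; the only point requiring a little care is bookkeeping around the definition of $\convt(\Gamma)$ as the set of \emph{finite} convex combinations, so that the sums above are legitimately finite and the linearity of $\rho^T(\cdot)$ applies termwise. It is also worth noting explicitly that the argument does not use finiteness of $\Gamma$, so the lemma holds for the infinite families (such as $\gam{fec}$) considered later in the paper. If one prefers, the two inclusions can be packaged slightly differently by first observing $\adm(\Gamma)=\adm(\convt(\Gamma))$ (same computation without the $w$ term) and then $\adm(\convt(\Gamma))=\adm(\convt(\Gamma)+\bR^E_{\ge 0})$ (the $w$ term only helps since $\rho\succeq 0$), but the direct one-shot computation above is cleanest.
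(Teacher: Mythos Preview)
Your proof is correct and follows essentially the same approach as the paper: one inclusion via Lemma~\ref{lem:adm-monotone}, then for the reverse inclusion you decompose an arbitrary element of $\dom(\Gamma)$ as a finite convex combination of objects plus a nonnegative vector and use linearity together with $\rho\succeq 0$ to conclude. The only differences are cosmetic (naming and the order in which you handle the nonnegative shift), and your remarks about finiteness of the convex combination and about not needing $\Gamma$ itself to be finite are accurate.
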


\begin{proof}
By definition, $\Gamma \subseteq \dom(\Gamma)$, so by Lemma~\ref{lem:adm-monotone}, $\adm(\dom(\Gamma)) \subseteq \adm(\Gamma)$. Let $\rho \in \adm(\Gamma)$ and let $\tilde{\gamma} \in \dom(\Gamma)$. Then there must exist a collection of objects, $\gamma_1,\gamma_2,\cdots,\gamma_r\in\Gamma$, a choice of weights $\mu_1, \mu_2, \ldots, \mu_r \ge 0$ summing to one, and a vector $\xi \in \bR^E_{\ge 0}$ such that
\[
\tilde{\gamma} = \sum_{i=1}^r \mu_i \gamma_i + \xi.
\]
Since $\rho$ and $\xi$ are nonnegative, and since $\rho$ is admissible for $\Gamma$,
\[
\rho^T \tilde{\gamma} = \sum_{i=1}^r \mu_i \rho^T \gamma_i + \rho^T \xi \ge 1.
\]
Thus, $\rho \in \adm(\dom(\Gamma))$. 
\end{proof}

For a given family $\Gamma$, consider the set of extreme points $\ext(\dom(\Gamma))$. The extreme points are nonnegative vectors in $\mathbb{R}^E$ and, therefore, can be viewed as a family of objects. Lemma~\ref{lem:gamma-equiv-dom} implies that $\Gamma\modeq\ext(\dom(\Gamma))$, since both families share the same dominant.

As an application of this last equivalence, consider the family, $\Gamma'_{st}$, of all walks in $G$ connecting two distinct vertices $s$ and $t$ and the family, $\Gamma_{st}$, of all simple paths connecting $s$ and $t$. Then, since $\Gamma_{st} = \ext(\dom(\Gamma'_{st})$, it follows that $\Gamma_{st}\modeq\Gamma'_{st}$. This simply recovers the intuitive observation that a density is admissible for the family of $st$-walks if and only if it is admissible for the family of $st$-paths.

%%%%%%%%%%%%%%%%%%%%%%%%%%%%%%
\section{Modulus of edge covers and fractional edge covers}\label{sec:mod_ec_fec}

We first calculate $\Mod_2(\gam{ec})$ for some common graphs. The approach in each case is similar, we first establish an upper bound on the modulus by finding an admissible density. In later sections, we will develop lower bounds that are shown to coincide with the upper bounds of this section, thus establishing the value of the modulus.

\begin{figure}
\begin{subfigure}{0.3\textwidth}
    \centering
    \includegraphics[width=0.5\textwidth]{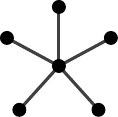}
    \caption{The Star Graph, $S_6$.}
    \label{fig:S6}
    \end{subfigure}
    \hfill
    \begin{subfigure}{0.3\textwidth}
    \centering
    \includegraphics[width=0.5\textwidth]{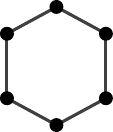}
    \caption{The Cycle Graph, $C_6$.}
    \label{fig:C6}
    \end{subfigure}
    \hfill
    \begin{subfigure}{0.3\textwidth}
    \centering
    \includegraphics[width=0.5\textwidth]{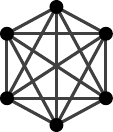}
    \caption{The Complete Graph, $K_6$.}
    \label{fig:K6}
\end{subfigure}
\caption{A standard set of graphs for modulus examples.}
\end{figure}

\begin{example}[Star Graph]\label{ex:star-ec}
Let $G = S_n$ be the unweighted star graph with $|V| = n\ge 3$ and $|E| = n-1$. Note that all edges of $S_n$ are \emph{pendant edges}---they are incident on at least one vertex of degree one. It follows that there is a single edge cover: $\gam{ec} = \{ E\}$. The symmetries of the graph and the uniqueness of the extremal density suggest that we restrict our search to constant densities. Since the single edge cover has $n-1$ edges, the density $\rho_0 \equiv \frac{1}{n-1}$ is admissible. This provides an upper bound on modulus: 
\[ 
\Mod_2(\gam{ec}) \le \cE_2(\rho_0) = \sum_{e \in E} \rho_0(e)^2 = (n-1) \cdot \frac{1}{(n-1)^2} = \frac{1}{n-1}.
\]
The fact that equality holds is established later through the lower bound in Example~\ref{ex:star-star}.
\end{example}

 Both of the next two examples attain a simple upper bound on the modulus.

 \begin{lemma}\label{lem:const-upper-bound-ec}
    Let $\gam{ec}$ be the family of edge covers on a graph, $G$, containing $n$ vertices and $m$ edges. Then
    \begin{equation*}
    \Mod_2(\gam{ec}) \le m\left\lceil\frac{n}{2}\right\rceil^{-2}.
    \end{equation*}
\end{lemma}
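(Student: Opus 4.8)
The plan is to follow the template used in Example~\ref{ex:star-ec}: exhibit a single admissible density for $\gam{ec}$ and compute its $2$-energy. Given the absence of any distinguished structure in a general $G$, the natural candidate is a constant density $\rho_0 \equiv c$, and the task reduces to finding the smallest $c \ge 0$ for which $\rho_0 \in \adm(\gam{ec})$.

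The key step is a cardinality bound on edge covers. For a constant density, $\ell_{\rho_0}(C) = c\,\lvert C\rvert$, so admissibility is equivalent to $c\,\lvert C\rvert \ge 1$ for every edge cover $C$, i.e. $c \ge 1/\min_{C\in\gam{ec}}\lvert C\rvert$. I would bound $\min_{C}\lvert C\rvert$ from below by a double-counting argument: each edge of $G$ is incident to at most two vertices, so a set of $k$ edges is incident to at most $2k$ vertices; since an edge cover must be incident to all $n$ vertices, $2\lvert C\rvert \ge n$, hence $\lvert C\rvert \ge \lceil n/2\rceil$ for every $C\in\gam{ec}$.

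It then follows that $c := \lceil n/2\rceil^{-1}$ satisfies $\ell_{\rho_0}(C) = c\,\lvert C\rvert \ge c\lceil n/2\rceil = 1$ for all $C\in\gam{ec}$, so $\rho_0\in\adm(\gam{ec})$. Plugging this into the definition of modulus gives
\[
\Mod_2(\gam{ec}) \le \cE_2(\rho_0) = \sum_{e\in E} \rho_0(e)^2 = m\left\lceil\frac{n}{2}\right\rceil^{-2},
\]
which is the claimed inequality.

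I do not expect a real obstacle here; the argument is elementary. The only point requiring a word of care is that $\gam{ec}$ be nonempty (equivalently, that $G$ have no isolated vertices), which is implicit in our standing assumption that the families under consideration are nontrivial. One could alternatively invoke Gallai's identity (minimum edge cover size $= n - \nu(G)$ with $\nu(G)\le\lfloor n/2\rfloor$) to get the same cardinality bound, but the direct counting argument is shorter and self-contained.
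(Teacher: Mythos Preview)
Your proposal is correct and essentially identical to the paper's proof: both establish $|C|\ge\lceil n/2\rceil$ for every edge cover $C$ (the paper phrases your double-counting argument as ``the handshaking lemma''), then take the constant density $\rho_0\equiv\lceil n/2\rceil^{-1}$ and compute its $2$-energy.
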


\begin{proof}
    Let $\gamma\in\gam{ec}$. Since each vertex of $G$ must meet at least one edge of $\gamma$, the handshaking lemma implies that
    \begin{equation*}
        |\gamma| \ge \left\lceil\frac{n}{2}\right\rceil.
    \end{equation*}
    From this, it follows that the constant density $\rho_0\equiv\left\lceil\frac{n}{2}\right\rceil^{-1}$ is admissible, and the bound follows from the observation that $\Mod_2(\gam{ec})\le\mathcal{E}_2(\rho_0).$
\end{proof}

\begin{example}[Cycle Graph]\label{ex:cycle-ec}
Let $G = C_n$ be the unweighted cycle graph with $|V| = n$ and $|E| = n$. Again, symmetry suggests that the extremal density is constant. Lemma~\ref{lem:const-upper-bound-ec} shows that
\begin{equation*}
    \Mod_2(\gam{ec}) \le n\left\lceil\frac{n}{2}\right\rceil^{-2}
    =
    \begin{cases}
        \frac{4}{n} & \text{ if $n$ is even},\\
        \frac{4n}{(n+1)^2} & \text{ if $n$ is odd}.
    \end{cases}
\end{equation*}

    By making the argument by symmetry more precise, using the Symmetry Rule of~\cite[Section~5.3]{albin2017modulus}, it is possible to show that $\rho_0$ from Lemma~\ref{lem:const-upper-bound-ec} is extremal in both cases. For the even cycle, this can also be established by the lower bound found in Example~\ref{ex:cycle-star}.
\end{example}

     \begin{figure}
        \centering
        \includegraphics[width=0.2\textwidth]{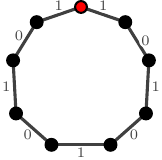}
        \caption{A minimal edge cover for an odd cycle $C$.}
        \label{fig:odd}
    \end{figure}

\begin{example}[Complete Graph]\label{ex:complete-ec}
    Let $G = K_n$ be the unweighted complete graph with $|V| = n$ and $|E| = \frac{n(n-1)}{2}$. Applying Lemma~\ref{lem:const-upper-bound-ec} again shows that
    \begin{equation*}
        \Mod_2(\gam{ec}) \le \frac{n(n-1)}{2}\left\lceil\frac{n}{2}\right\rceil^{-2}
            =
    \begin{cases}
       \frac{2(n-1)}{n}  & \text{ if $n$ is even},\\
       \frac{2n(n-1)}{(n+1)^2} & \text{ if $n$ is odd}.
    \end{cases}
    \end{equation*}
     As in the previous example, the Symmetry Rule can be used to show that these bounds are sharp.
\end{example}

Numerical computation of the edge cover modulus, $\Mod_{p,\sigma}(\gam{ec})$, is challenging because each edge cover induces an inequality constraint in the optimization problem~\eqref{eq:optimization_prob}. On general graphs, this leads to an exponentially large number of constraints. It is possible to show that the family $\gam{ec}$ is equivalent (in the sense of modulus) to the family of minimal edge covers. While this removes many redundant constraints, the number of minimal edge covers of a graph is still generally large.  For example, when $n$ is even, the complete graph, $K_n$, has at least $(n-1)!!$ minimal edge covers. Even the task of enumerating all edge covers of a large graph quickly becomes computationally infeasible. One way to circumvent this combinatorial complexity is to introduce a relaxed problem. This can be accomplished using fractional edge covers.

\subsection{The structure of fractional edge covers}

Since the family $\gam{fec}$ is uncountably infinite, the modulus problem~\eqref{eq:optimization_prob} on this family cannot be immediately viewed as a standard convex optimization problem. However, it is possible to find an equivalent finite family (in the sense of Section~\ref{sec:defs}), $\bfec$. This comes from the observations that $\gam{fec}$ is convex and \emph{recessive}, in the sense that $\gam{fec}=\dom(\gam{fec})$, which implies that $\gam{fec}\modeq\ext(\gam{fec})$. The extreme points of $\gam{fec}$ have a relatively simple structure. 

The following definition, lemmas, and proofs where inspired by the work done in~\cite{scheinerman2011fractional} with fractional perfect matchings. 

\begin{definition}\label{def:bfec}
A vector $\gamma\in\mathbb{R}^{E}_{\ge 0}$ is called a \emph{basic fractional edge cover} if
\begin{itemize}
    \item $\gamma$ is a fractional edge cover taking only values in $\{0,1/2,1\}$,
    \item the support, $\supp\gamma$, is a vertex-disjoint union of odd cycles and substars, and
    \item $\gamma(e)=1/2$ if and only if $e$ belongs to an odd cycle in $\supp\gamma$.
\end{itemize}
The family of all basic fractional edge covers is denoted $\bfec$. The remainder of this section is devoted to showing that the extreme points of $\gam{fec}$ are basic fractional edge covers.
\end{definition}

\begin{lemma}\label{lem:no-even-cycles}
    If $\gamma\in\ext(\gam{fec})$ then $\supp\gamma$ contains no even cycles.
\end{lemma}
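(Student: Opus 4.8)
### Proof Proposal

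The plan is to argue by contradiction using a perturbation argument: if $\gamma \in \ext(\gam{fec})$ had an even cycle in its support, we would construct two distinct fractional edge covers whose average is $\gamma$, contradicting extremality. Suppose $C = (e_1, e_2, \ldots, e_{2k}, e_1)$ is an even cycle contained in $\supp\gamma$, with consecutive edges sharing a vertex. Since $C$ is even, it is bipartite, so we can 2-color its edges, assigning alternating signs $s(e_i) = (-1)^i$ around the cycle. Define the perturbation vector $\xi \in \bR^E$ by $\xi(e_i) = s(e_i)$ for $e_i \in C$ and $\xi(e) = 0$ otherwise.

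The key point is that $\xi$ is \emph{balanced at every vertex} of the cycle: each vertex $v$ of $C$ is incident to exactly two edges of $C$, one receiving $+1$ and one receiving $-1$ (because the signs alternate around an even cycle), so $\xi(\delta(v)) = 0$ for every $v \in V$. Vertices not on $C$ see $\xi(\delta(v)) = 0$ trivially. Hence for any scalar $\epsilon$, $(\gamma \pm \epsilon\xi)(\delta(v)) = \gamma(\delta(v)) \ge 1$ for all $v$, so the fractional-edge-cover inequalities are preserved. It remains only to choose $\epsilon > 0$ small enough that $\gamma \pm \epsilon\xi$ stay nonnegative. Since every $e_i \in C$ lies in $\supp\gamma$, we have $\gamma(e_i) > 0$ for all $i$; taking $\epsilon = \min_{e_i \in C}\gamma(e_i) > 0$ guarantees $\gamma(e_i) \pm \epsilon\xi(e_i) \ge 0$, and edges outside $C$ are unchanged. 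Therefore $\gamma^{+} := \gamma + \epsilon\xi$ and $\gamma^{-} := \gamma - \epsilon\xi$ both lie in $\gam{fec}$, they are distinct (since $\xi \neq 0$ and $\epsilon > 0$), and $\gamma = \tfrac12\gamma^{+} + \tfrac12\gamma^{-}$, contradicting $\gamma \in \ext(\gam{fec})$.

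The main obstacle — really a modest bookkeeping point rather than a deep difficulty — is to be careful about the definition of ``cycle contained in $\supp\gamma$'' and about edges shared between the cycle and the rest of the support: the perturbation must be supported only on the cycle edges, and one must make sure that a vertex of $C$ incident to additional edges of $\supp\gamma$ outside $C$ still only receives the two cycle contributions from $\xi$, which it does by construction. One should also confirm that $C$ having length at least $4$ (an actual even cycle, with no repeated vertices) makes the alternating sign assignment well defined and $\xi \not\equiv 0$. With these checks in place the contradiction is immediate, establishing that no $\gamma \in \ext(\gam{fec})$ can contain an even cycle in its support.
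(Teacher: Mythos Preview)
Your proof is correct and follows essentially the same approach as the paper: both argue by contradiction, construct an alternating $\pm 1$ perturbation supported on the even cycle, observe that this perturbation sums to zero on every star, and choose the perturbation parameter small enough to preserve nonnegativity, yielding two distinct fractional edge covers whose midpoint is $\gamma$. Your version spells out the choice of $\epsilon$ and the convex combination more explicitly, but the underlying idea is identical.
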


\begin{proof}
    Suppose, to the contrary, that $C$ is an even cycle in $\supp\gamma$. Define $\tau\in\mathbb{R}^E$ to be a function that assigns $1$ and $-1$ alternately to the edges of $C$ (starting from an arbitrary edge), and that assigns 0 to all other edges of $G$. Note that, for any number $\alpha$ and any vertex $v$, 
    \begin{equation*}
    (\gamma+\alpha\tau)(\delta(v)) = \gamma(\delta(v))+\alpha\tau(\delta(v)) = \gamma(\delta(v)) \ge 1.
    \end{equation*}
    Thus, for sufficiently small $|\alpha|$, $\gamma + \alpha \tau$ is non-negative and, therefore, a fractional edge cover. This implies that the extreme point $\gamma$ lies in an open line segment in $\gam{fec}$, which is a contradiction.
\end{proof}

\begin{lemma}\label{lem:component-substars}
    Let $\gamma\in\ext(\gam{fec})$. Then, any connected component of $\supp\gamma$ that contains a pendant edge is a substar.
\end{lemma}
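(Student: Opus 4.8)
The plan is to combine the standard description of the vertices of a polyhedron with the structure forced on a connected component of $\supp\gamma$. Since $\gam{fec} = \{\gamma \in \mathbb{R}^E_{\ge 0} : \gamma(\delta(v)) \ge 1 \text{ for all } v \in V\}$, a point $\gamma \in \gam{fec}$ is an extreme point if and only if there is no nonzero $\tau \in \mathbb{R}^E$ with $\supp\tau \subseteq \supp\gamma$ and $\tau(\delta(v)) = 0$ for every \emph{tight} vertex $v$ (that is, every $v$ with $\gamma(\delta(v)) = 1$); indeed, given such a $\tau$, both $\gamma \pm \varepsilon\tau$ lie in $\gam{fec}$ for all sufficiently small $\varepsilon > 0$, exactly as in the proof of Lemma~\ref{lem:no-even-cycles}. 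Distinct connected components of $\supp\gamma$ are vertex-disjoint, and every tight vertex is incident to an edge of $\supp\gamma$, so this condition decouples over the components; it therefore suffices to fix one component $K$, with edge set $E(K)$ and vertex set $V(K)$, assume it contains a pendant edge, and show it is a substar.

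Inside $K$, extremality says that the only $\tau \in \mathbb{R}^{E(K)}$ with $\sum_{e \in \delta(v) \cap E(K)} \tau(e) = 0$ for every tight $v \in V(K)$ is $\tau = 0$. The coefficient matrix of this homogeneous system has one column per edge of $K$ and one row per tight vertex of $K$, so triviality of its kernel forces $K$ to have at least $|E(K)|$ tight vertices; in particular $|E(K)| \le |V(K)|$. Combined with $|E(K)| \ge |V(K)| - 1$ from connectivity, $K$ is either a tree or unicyclic.

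I would then rule out the unicyclic case using the pendant edge. If $K$ is unicyclic, $|E(K)| = |V(K)|$, so every vertex of $K$ is tight. Let $v_0$ be a leaf of $K$ (one exists because $K$ contains a pendant edge) and $e_0 = v_0v_1$ its incident edge; tightness at $v_0$ gives $\gamma(e_0) = \gamma(\delta(v_0)) = 1$. A unicyclic graph has at least three vertices, so $v_1$ meets a second edge $e_1 \in E(K)$, and $\gamma(e_1) > 0$ since $e_1 \in \supp\gamma$; then $\gamma(\delta(v_1)) \ge \gamma(e_0) + \gamma(e_1) > 1$, contradicting tightness at $v_1$. Hence $K$ is a tree, and now the count above gives at most one non-tight vertex in $V(K)$. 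Taking a longest path $v_0v_1\cdots v_\ell$ in $K$ --- whose endpoints are leaves --- I would assume $\ell \ge 3$ and run a short case analysis on which, if any, of $v_0, v_1, v_{\ell-1}, v_\ell$ is the lone non-tight vertex: tightness at a leaf forces the incident path-edge to have $\gamma$-value $1$, and then tightness at that edge's other endpoint forces an adjacent path-edge to have $\gamma$-value $0$, contradicting that it lies in $\supp\gamma$. Thus $\ell \le 2$, so $K$ has diameter at most $2$ and is a star, hence a substar.

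The step I expect to be the main obstacle is this final tree analysis: one must be careful that the endpoints of a longest path really are leaves of $K$, that ``at most one non-tight vertex'' is correctly read off from the rank count, and that the handful of cases are enumerated so that in every one some edge known to lie in $\supp\gamma$ is forced to carry value $0$.
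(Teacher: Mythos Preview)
Your argument is correct and takes a genuinely different route from the paper's. The paper works constructively: starting from the pendant edge it traces a maximum-length path in $H=\supp\gamma$ and, depending on whether that path terminates at another pendant edge or wraps around into an (odd) cycle, it writes down an explicit alternating perturbation $\tau$ that keeps $\gamma\pm\alpha\tau$ inside $\gam{fec}$, contradicting extremality. In particular, the paper's case (b) relies on Lemma~\ref{lem:no-even-cycles} to know the cycle encountered is odd, so that the half-integer alternation around the cycle can be made to cancel.

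You replace these hand-built perturbations by the linear-algebraic characterization of vertices: extremality forces the homogeneous system on $E(K)$ coming from the tight vertices of $K$ to have trivial kernel, so the number of tight vertices is at least $|E(K)|$. This immediately gives $|E(K)|\le |V(K)|$, hence $K$ is a tree or unicyclic, and then short tightness-propagation arguments (``leaf tight $\Rightarrow$ incident edge has value $1$ $\Rightarrow$ the next vertex cannot be tight'') finish both cases. The case enumeration you worry about is fine: for $\ell\ge 3$ the pairs $\{v_0,v_1\}$ and $\{v_{\ell-1},v_\ell\}$ are disjoint, and each pair must contain a non-tight vertex, contradicting that $K$ has at most one. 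Your approach is cleaner in that it never needs an explicit $\tau$ and does not invoke Lemma~\ref{lem:no-even-cycles}; the paper's approach, on the other hand, is slightly more self-contained for a reader who has not seen the rank description of polyhedral vertices and makes the obstruction in each case visually explicit.
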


\begin{proof}
    Consider a connected component of $H:=\supp\gamma$ containing a pendant edge. We shall show that this component contains no path of length greater than 2, which implies that the component is a substar. Let $e = (u,v)$ be the pendant edge, with $\deg_H(u) = 1$. If $\deg_H(v) = 1$, then the component consists solely of the edge $e$ and we are done. Now, suppose $\deg_H(v) > 1$ and that there is a path of length greater than 2. Starting at node $u$, trace a maximum-length path in $H$. This path, which is assumed to have length at least 3, must either end at another pendant edge in $H$ or cannot be extended further without creating a cycle. We consider these two possibilities in turn.
    
    \begin{enumerate}
        \item[(a)] Assume the path ends in another pendant edge, denoted $\{x,y\}$, with $\deg_H(y)=1$. Since the path has length at least $3$, $x\ne v$.  Let $\tau\in\mathbb{R}^E$ be a function that assigns 0 to the two pendant edges, assigns 1 and $-1$ alternately to the other edges in the path, and assigns 0 to all other edges in $E$ (see Figure~\ref{fig:pendant_edge_proof}). Again, if we show that for sufficiently small $|\alpha|$, $\gamma+\alpha\tau\in\gam{fec}$, we arrive at a contradiction. There are two types of vertices to consider in this case. Consider the vertex $v$. The path in $H$ connecting $u$ to $y$ passes from $u$ to $v$ and then on to a third vertex $w$. (It is possible that $w=x$.) Since $\gamma\in\gam{fec}$, we know that $\gamma(\{u,v\}) \geq 1$ and since $\{v,w\}\in\supp\gamma$, we know that $\gamma(\{v,w\})>0$. So,
        \begin{equation*}
        (\gamma+\alpha\tau)(\delta(v)) = \gamma(\delta(v))+\alpha\tau(\delta(v)) 
        \ge \gamma(\{u,v\}) + \gamma(\{v,w\}) - \alpha
        > 1 - \alpha,
        \end{equation*}
        which is greater than or equal to 1 for sufficiently small $|\alpha|$. The vertex $x$ is similar. For all other vertices, $z$, $\tau(\delta(z))=0$, arriving at the contradiction.

        \begin{figure}
        \centering
            \includegraphics[width=0.5\textwidth]{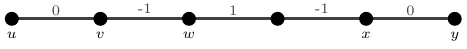}
        \caption{Values of $\tau$ for pendant edges connected by a path.}
        \label{fig:pendant_edge_proof}
        \end{figure}
         
         \item[(b)] Assume instead that the path ends in a cycle. That is, one can trace a path in $H$ starting from $u$, passing through $v$, and continuing until the path eventually circles back to repeat a vertex $w$. (It is possible that $w=v$.) From Lemma~\ref{lem:no-even-cycles}, the cycle through $w$ must be odd. Let $\tau\in\mathbb{R}^E$ be a function, shown in Figure~\ref{fig:oddcycle_pendantedge}, that assigns 0 to the pendant edge, assigns $\pm 1/2$ alternately on the odd cycle with the two edges through $w$ sharing the same value, and assigns 0 to all other edge in $E$. If $w\ne v$, then $\tau$ should alternately assign $\pm 1$ to the edges connecting $v$ and $w$ in such a way that $\tau(\delta(w))=0$. (The value of $\tau$ is set to zero on all other edges.)

         It can be seen that this once again leads to a contradiction of the assumption that $\gamma$ is an extreme point. The function $\tau$ sums to zero on all stars other than $\delta(v)$ and $\gamma(\delta(v))>1$, which makes $\gamma + \alpha\tau\in\gam{fec}$ for sufficiently small $|\alpha|$.         
          
    \begin{figure}
        \centering
        \includegraphics[width=0.4\textwidth]{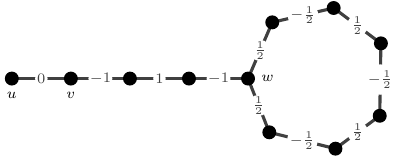}
        \caption{Values of $\tau$ for a pendant edge connected to a cycle.}
        \label{fig:oddcycle_pendantedge}
    \end{figure}
    \end{enumerate}
\end{proof}

\begin{lemma}\label{lem:component-cycles}
    Let $\gamma\in\ext(\gam{fec})$. Then, any connected component of $\supp\gamma$ that does not contain a pendant edge is an odd cycle.
\end{lemma}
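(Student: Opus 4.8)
The plan is to show that such a component $H$ must be $2$-regular; combined with Lemma~\ref{lem:no-even-cycles} this forces $H$ to be an odd cycle. So suppose, toward a contradiction, that $H$ is a connected component of $\supp\gamma$ in which every vertex has degree at least $2$, and that $H$ is \emph{not} an odd cycle. Since $H$ has minimum degree at least $2$ it contains a cycle, and by Lemma~\ref{lem:no-even-cycles} every cycle of $H$ is odd; in particular $H$ is connected and non-bipartite. A connected graph with minimum degree at least $2$ satisfies $|E(H)|\ge|V(H)|$, with equality precisely when $H$ is a single cycle (a connected unicyclic graph with no leaves is its cycle). Since $H$ is assumed not to be a cycle, $|E(H)| > |V(H)|$.

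Next I produce a nonzero perturbation supported on $H$. Let $M\in\{0,1\}^{V(H)\times E(H)}$ be the unsigned vertex--edge incidence matrix of $H$. Because $H$ is connected and non-bipartite, $\operatorname{rank}M = |V(H)|$, so the null space of $M$, viewed as a subspace of $\mathbb{R}^{E(H)}$, has dimension $|E(H)| - |V(H)| \ge 1$. Choose a nonzero $\tau_0$ in this null space and extend it by zero to a vector $\tau\in\mathbb{R}^E$ supported on $E(H)\subseteq\supp\gamma$. By construction $\tau\ne 0$ and $\tau(\delta(v)) = 0$ for every $v\in V$.

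Finally I run the usual extreme-point argument, as in the previous two lemmas. Since $\supp\tau\subseteq\supp\gamma$, for all sufficiently small $|\alpha|$ both $\gamma+\alpha\tau$ and $\gamma-\alpha\tau$ are nonnegative; and since $\tau(\delta(v))=0$ for all $v$, we have $(\gamma\pm\alpha\tau)(\delta(v)) = \gamma(\delta(v)) \ge 1$, so $\gamma\pm\alpha\tau\in\gam{fec}$. As $\tau\ne 0$, the extreme point $\gamma$ lies in an open line segment in $\gam{fec}$, a contradiction. Hence $H$ is a single cycle, which must be odd by Lemma~\ref{lem:no-even-cycles}.

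The only point requiring care is the existence of a nonzero edge-weighting of $H$ that sums to zero at every vertex, which I obtain from the standard fact that the unsigned incidence matrix of a connected non-bipartite graph has full row rank $|V(H)|$ (equivalently, that $0$ is the only such weighting on such a graph). If one prefers to stay in the concrete, picture-driven style of Lemmas~\ref{lem:no-even-cycles}--\ref{lem:component-substars}, the vector $\tau$ can instead be built by hand from a suitable pair of cycles in $H$: assign $\pm\frac12$ alternately around each odd cycle so that the vertex sums cancel except at the vertices where the cycles attach to the rest of $H$, then propagate $\pm1$ alternately along the connecting paths to cancel the leftover, choosing the signs according to the parities of those paths. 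The linear-algebra route is shorter and avoids this case analysis.
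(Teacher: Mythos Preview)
Your argument is correct and takes a genuinely different route from the paper. The paper proceeds by explicit case analysis: starting from an odd cycle $C$ in the component, it extends a path out of $C$ and argues that this path must either return to $C$ (forcing an even cycle, contradicting Lemma~\ref{lem:no-even-cycles}) or terminate in a second odd cycle, yielding a barbell or bowtie; it then writes down a concrete alternating $\pm\tfrac12,\pm1$ vector $\tau$ on that subgraph with $\tau(\delta(v))=0$ everywhere. You replace all of this casework with a single linear-algebra observation: a connected non-bipartite graph has unsigned incidence matrix of full row rank $|V(H)|$, so once $|E(H)|>|V(H)|$ the kernel is nontrivial and any nonzero kernel vector serves as $\tau$. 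Your approach is shorter and more robust (no pictures, no parity bookkeeping), while the paper's approach is more self-contained in that it does not invoke the rank-of-incidence-matrix fact and keeps the argument in the same hands-on style as Lemmas~\ref{lem:no-even-cycles} and~\ref{lem:component-substars}. Your final paragraph nicely bridges the two by sketching how the explicit $\tau$ could be built, which is exactly what the paper does.
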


\begin{proof}    
    
    \begin{figure}
        \centering
        \includegraphics[width=0.3\textwidth]{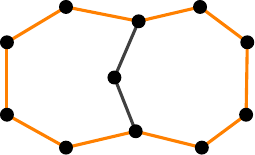}
        \caption{A path that returns to an odd cycle $C$ and creates an even cycle, i.e. the ``snowman'' graph. Orange edges make up the even cycles.}
        \label{fig:snowman}
    \end{figure}    

    Let $H'$ be a connected component of $H := \supp\gamma$ that does not contain a pendant edge. The $H'$ must contain at least one cycle, $C$. By Lemma~\ref{lem:no-even-cycles}, any such cycle must be odd. As before, we proceed by contradiction. Suppose that $H'\setminus C\ne\emptyset$. Then $H'$ must contain a vertex $v$ such that $\deg_H(v) \geq 3$. That is, there must be an edge of $H$ that is incident to $v$ but does not lie in the cycle $C$. Consider extending this edge to a maximum-length path in $H$. Since the component is assumed to contain no pendant edges, this path must eventually create a cycle.  There are a few cases to consider.

    One possibility is that the path leaving $v$ returns to a different vertex $w$ in $C$, producing a ``snowman'' as shown in Figure~\ref{fig:snowman}. The vertices $v$ and $w$ are connected by two paths in $C$, and since $C$ is an odd cyle, one of these paths has an even number of edges while the other has an odd number. Removing the path with even length results in an even cycle, which contradicts Lemma~\ref{lem:no-even-cycles}.
    
    So, the path leaving $C$ from $v$ does not return to any other vertex of $C$ (and does not end in a pendant edge). Thus, $H$ must contain a graph $K$ with 2 (necessarily odd) cycles connected by a path (possibly of length 0) as in Figure~\ref{fig:oddcyclecases}. Let $\tau\in\mathbb{R}^E$ be a function that assigns 0 to edges not in $K$, $\pm 1$ alternately on the path connecting the two cycles of $K$, and $\pm 1/2$ alternately around the cycles of $K$ so that $\tau$ sums to zero on all stars of $G$. (See Figure \ref{fig:odd_cycle_barbell} and Figure \ref{fig:bowtie} for examples.) This again yields a contradiction, since $\gamma+\alpha\tau\in\gam{fec}$ for sufficiently small $|\alpha|$.
    
     \begin{figure}
     \centering 
     
     \begin{subfigure}[b]{0.4\textwidth}
     \begin{center}
             \begin{tikzpicture}
                \Vertex[x=0.3, y=-1.187, size=.2,color=black]{1}
                \Vertex[x=1.05, y=-0.63, size=.2,color=black]{2}
                \Vertex[x=1.07, y=0.596, size=.2,color=black]{3}
                \Vertex[x=0.27, y=1.195, size=.2,color=black]{4}
                \Vertex[x=-0.77, y=0.952, size=.2,color=black]{5}
                \Vertex[x=-1.18, y=0, size=.2,color=black]{6}
                \Vertex[x=-0.82, y=-0.91, size=.2,color=black]{7}
                \Vertex[x=-2.18, y=0, size=.2,color=black]{A}
                \Vertex[x=-3.18, y=0, size=.2,color=black]{B}
                \Vertex[x=-4.18, y=0, size=.2,color=black]{C}
                \Vertex[x=-5.18, y=0, size=.2,color=black]{D}
                \Vertex[x=-5.77, y=0.952, size=.2,color=black]{8}
                \Vertex[x=-6.73, y=1.195, size=.2,color=black]{9}
                \Vertex[x=-7.57, y=0.596, size=.2,color=black]{10}
                \Vertex[x=-7.55, y=-0.63, size=.2,color=black]{11}
                \Vertex[x=-6.7, y=-1.187, size=.2,color=black]{12}
                \Vertex[x=-5.77, y=-0.91, size=.2,color=black]{13}
                \Edge[label=$\frac{1}{2}$](1)(2)
                \Edge[label=$-\frac{1}{2}$](2)(3)
                \Edge[label=$\frac{1}{2}$](3)(4)
                \Edge[label=$-\frac{1}{2}$](4)(5)
                \Edge[label=$\frac{1}{2}$](5)(6)
                \Edge[label=$\frac{1}{2}$](6)(7)
                \Edge[label=$-\frac{1}{2}$](7)(1)
                \Edge[label=$-1$](6)(A)
                \Edge[label=1](B)(A)
                \Edge[label=$-1$](B)(C)
                \Edge[label=1](C)(D)
                \Edge[label=$-\frac{1}{2}$](D)(8)
                \Edge[label=$-\frac{1}{2}$](D)(13)
                \Edge[label=$\frac{1}{2}$](9)(8)
                \Edge[label=$-\frac{1}{2}$](10)(9)
                \Edge[label=$\frac{1}{2}$](11)(10)
                \Edge[label=$-\frac{1}{2}$](12)(11)
                \Edge[label=$\frac{1}{2}$](12)(13)
            \end{tikzpicture}
        \end{center}
        \caption{}
        \label{fig:odd_cycle_barbell}
     \end{subfigure}
     \hfill
    \begin{subfigure}[b]{0.4\textwidth}
         \centering
            \begin{tikzpicture}
               \Vertex[x=0.3, y=-1.187, size=.2,color=black]{1}
                \Vertex[x=1.05, y=-0.63, size=.2,color=black]{2}
                \Vertex[x=1.07, y=0.596, size=.2,color=black]{3}
                \Vertex[x=0.27, y=1.195, size=.2,color=black]{4}
                \Vertex[x=-0.77, y=0.952, size=.2,color=black]{5}
                \Vertex[x=-1.18, y=0, size=.2,color=black]{6}
                \Vertex[x=-0.82, y=-0.91, size=.2,color=black]{7}
                % \Vertex[x=-2.18, y=0, size=.2,color=black]{A}
                % \Vertex[x=-3.18, y=0, size=.2,color=black]{B}
                % \Vertex[x=-4.18, y=0, size=.2,color=black]{C}
                % \Vertex[x=-2.18, y=0, size=.2,color=black]{D}
                \Vertex[x=-1.77, y=0.952, size=.2,color=black]{8}
                \Vertex[x=-2.73, y=1.195, size=.2,color=black]{9}
                \Vertex[x=-3.57, y=0.596, size=.2,color=black]{10}
                \Vertex[x=-3.55, y=-0.63, size=.2,color=black]{11}
                \Vertex[x=-2.7, y=-1.187, size=.2,color=black]{12}
                \Vertex[x=-1.77, y=-0.91, size=.2,color=black]{13}
                \Edge[label=$\frac{1}{2}$](1)(2)
                \Edge[label=$-\frac{1}{2}$](2)(3)
                \Edge[label=$\frac{1}{2}$](3)(4)
                \Edge[label=$-\frac{1}{2}$](4)(5)
                \Edge[label=$\frac{1}{2}$](5)(6)
                \Edge[label=$\frac{1}{2}$](6)(7)
                \Edge[label=$-\frac{1}{2}$](7)(1)
                % \Edge[label=$-1$](6)(D)
                % \Edge[label=1](B)(A)
                % \Edge[label=$-1$](B)(C)
                % \Edge[label=1](C)(D)
                \Edge[label=$-\frac{1}{2}$](6)(8)
                \Edge[label=$-\frac{1}{2}$](6)(13)
                \Edge[label=$\frac{1}{2}$](9)(8)
                \Edge[label=$-\frac{1}{2}$](10)(9)
                \Edge[label=$\frac{1}{2}$](11)(10)
                \Edge[label=$-\frac{1}{2}$](12)(11)
                \Edge[label=$\frac{1}{2}$](12)(13)
            \end{tikzpicture}
         \caption{}
         \label{fig:bowtie}
     \end{subfigure}
        \caption{Values of $\tau$ for two connected cycles.}
        \label{fig:oddcyclecases}
    \end{figure}    
\end{proof}

\begin{lemma}\label{lem:extreme-bfec}
    Every extreme point in $\ext(\gam{fec})$ is a basic fractional edge cover.
\end{lemma}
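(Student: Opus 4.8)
The plan is to combine the three preceding lemmas, which already determine the \emph{topology} of $\supp\gamma$, with two short perturbation arguments that determine the \emph{values} of $\gamma$ on each connected component. Fix $\gamma\in\ext(\gam{fec})$ and write $H:=\supp\gamma$. By Lemma~\ref{lem:component-cycles}, any component of $H$ with no pendant edge is an odd cycle, and by Lemma~\ref{lem:component-substars} (together with Lemma~\ref{lem:no-even-cycles}), any component with a pendant edge is a substar; distinct components of a graph are automatically vertex-disjoint, so $H$ is a vertex-disjoint union of odd cycles and substars. This is the second bullet of Definition~\ref{def:bfec}, and it reduces the proof to showing $\gamma\equiv 1$ on each substar component and $\gamma\equiv\frac12$ on each odd-cycle component: since $\gamma\equiv 0$ off $H$, these two facts give both that $\gamma$ takes values only in $\{0,\frac12,1\}$ and that $\gamma(e)=\frac12$ precisely when $e$ lies on an odd cycle of $H$.

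For a substar component, let $c$ be its center and $e_1,\dots,e_d$ its edges, with $e_i=\{c,u_i\}$. Each $u_i$ has degree one in $H$ (otherwise the component would not be a substar), so the only edge of $H$ at $u_i$ is $e_i$ and the fractional-edge-cover constraint at $u_i$ is simply $\gamma(e_i)\ge 1$. If this inequality were strict for some $i$, I would perturb by $\tau:=\mathbbm{1}_{\{e_i\}}$: for small $\alpha>0$ one has $(\gamma\pm\alpha\tau)(\delta(u_i))=\gamma(e_i)\pm\alpha\ge 1$ and $(\gamma\pm\alpha\tau)(\delta(c))=\sum_j\gamma(e_j)\pm\alpha\ge\gamma(e_i)\pm\alpha>1$, while $\tau(\delta(z))=0$ for every other vertex $z$, so $\gamma\pm\alpha\tau\in\gam{fec}$ and $\gamma$ would not be extreme. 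Hence $\gamma(e_i)=1$ for every $i$.

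For an odd-cycle component, written as $v_1e_1v_2e_2\cdots v_{2k+1}e_{2k+1}v_1$ with cyclic indices, every edge of $H$ at $v_i$ is $e_{i-1}$ or $e_i$, so the constraint at $v_i$ reads $\gamma(e_{i-1})+\gamma(e_i)\ge 1$. I claim each of these is tight. Otherwise, say the constraint at $v_1$ is slack; set $\tau(e_i):=(-1)^{i-1}$ on the cycle and $\tau\equiv 0$ elsewhere. Then $\tau(\delta(v_i))=0$ for $i=2,\dots,2k+1$ and $\tau(\delta(v_1))=\tau(e_{2k+1})+\tau(e_1)=2$, with $\tau(\delta(z))=0$ off the cycle; since $\gamma>0$ on the cycle and $\gamma(\delta(v_1))>1$, $\gamma\pm\alpha\tau\in\gam{fec}$ for small $\alpha>0$, contradicting extremality. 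With all cyclic constraints tight, chasing the equalities $\gamma(e_{i-1})+\gamma(e_i)=1$ around a cycle of \emph{odd} length yields $2\gamma(e_1)=1$, forcing the unique solution $\gamma\equiv\frac12$ on the cycle (equivalently, the incidence matrix of an odd cycle is nonsingular, so the constant $\frac12$ vector is the only feasible choice when all constraints are active). This settles all three cases. The only genuine obstacle is this last step: one must know that an odd cycle admits no nontrivial ``balanced'' perturbation and that its fully tight system has the constant solution; the explicit alternating $\tau$ above handles the first point and the elementary computation $2\gamma(e_1)=1$ handles the second. Everything else is bookkeeping over the components supplied by the earlier lemmas.
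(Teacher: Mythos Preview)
Your proof is correct and follows essentially the same approach as the paper's: use Lemmas~\ref{lem:component-substars} and~\ref{lem:component-cycles} to fix the topology of $\supp\gamma$, then on each substar component show $\gamma\equiv 1$ by perturbing any edge with $\gamma(e)>1$, and on each odd-cycle component first force all vertex constraints to be tight via an alternating $\pm 1$ perturbation concentrated at a slack vertex, then solve the resulting tight system to get $\gamma\equiv\frac12$. The only cosmetic differences are that the paper handles the substar case with a one-sided ray argument (defining $\tilde\gamma$ with $\tilde\gamma(e')=1$ and noting $\gamma$ lies on a ray from $\tilde\gamma$) where you use a two-sided perturbation, and the paper phrases the odd-cycle computation as showing $\tau=\gamma-\tilde\gamma$ vanishes rather than chasing the equalities directly; neither difference is substantive.
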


\begin{proof}
Lemmas~\ref{lem:component-substars} and~\ref{lem:component-cycles} show that each connected component of $H:=\supp\gamma$ is either a substar or an odd cycle in $G$. To complete the proof, we must show that $\gamma$ only takes values in $\{0,1/2,1\}$, with the value $1/2$ occurring exactly on the odd cycles of $H$.

First, consider a substar component, $S$, of $H$. All edges in this case are pendant edges and, therefore, $\gamma$ must be at least 1 on each edge. On the other hand, suppose $\gamma(e')>1$ for some $e'\in S$, and define
\begin{equation*}
    \tilde{\gamma}(e) =
    \begin{cases}
        1 & \text{if }e=e',\\
        \gamma(e) &\text{otherwise}.
    \end{cases}
\end{equation*}
Then $\tilde{\gamma}\in\gam{fec}$ and $\tilde{\gamma}\preceq\gamma$. Since $\gam{fec}$ is recessive, this implies that $\gamma$ lies on the relative interior of a ray in $\gam{fec}$ emanating from $\tilde{\gamma}$ and, therefore, that $\gamma$ cannot be an extreme point.

Next, consider a component, $C$, of $H$ comprising an odd cycle. We wish to show that $\gamma(e)=1/2$ on all edges of $C$. We begin by observing that $\gamma(\delta(v))=1$ for every vertex $v$ in the cycle. Suppose to the contrary that $\gamma(\delta(v))>1$ for some vertex $v$, and consider the vector $\tau\in\mathbb{R}^E$, supported on $C$, alternately taking the values $\pm 1$ around the cycle in such a way that $+1$ is assigned to both edges incident on $v$. Then, $\gamma+\alpha\tau\in\gam{fec}$ for sufficiently small $|\alpha|$, contradicting the extremality of $\gamma$.

Next, we argue that if $\gamma(\delta(v))=1$ for all $v\in C$ then $\gamma(e)=1/2$ for all edges in $C$. To see this, define
\begin{equation*}
    \tilde{\gamma}(e) = 
    \begin{cases}
    1/2 &\text{if }e\in C,\\
    \gamma(e)&\text{otherwise}.        
    \end{cases}
\end{equation*}
Then, $\tilde{\gamma}\in\gam{fec}$ and $\tilde{\gamma}(\delta(v))=1$ for every vertex $v\in C$. Define $\tau:=\gamma-\tilde{\gamma}$. Then $\supp\tau\subseteq C$ and $\tau(\delta(v))=0$ for every $v\in C$. Choose an adjacent pair of edges $e_1,e_2\in C$. Then $\tau(e_2)=-\tau(e_1)$. Continuing around the cycle we find that the next edge, $e_3$, must satisfy $\tau(e_3)=-\tau(e_2)=\tau(e_1)$ and so on. Since the cycle is odd, the last edge we cross, $e_r$, that completes the cycle must have $\tau(e_r)=\tau(e_1)$. But, since $\tau$ must sum to zero on the star including these two edges, $\tau(e_r)=\tau(e_1)=0$, implying that $\tau=0$. Thus, $\gamma=\tilde{\gamma}$, implying that $\gamma(e)=1/2$ on all edges $e\in C$.
\end{proof}

\begin{theorem}\label{thm:fec-bfec-equiv}
The families $\gam{fec}$ and $\bfec$ are equivalent; $\gam{fec}\modeq\bfec$.
\end{theorem}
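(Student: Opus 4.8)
The plan is to sandwich $\ext(\gam{fec})$ between $\bfec$ and $\gam{fec}$ and then use the fact that $\gam{fec}$ is recessive to collapse the sandwich.

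First I would record two easy inclusions of families. By Lemma~\ref{lem:extreme-bfec}, $\ext(\gam{fec})\subseteq\bfec$; and since the first bullet of Definition~\ref{def:bfec} makes every basic fractional edge cover a fractional edge cover, $\bfec\subseteq\gam{fec}$. Applying Lemma~\ref{lem:adm-monotone} to the chain $\ext(\gam{fec})\subseteq\bfec\subseteq\gam{fec}$ (and remembering that $\adm(\cdot)$ reverses inclusions) gives
\[
\adm(\gam{fec})\subseteq\adm(\bfec)\subseteq\adm(\ext(\gam{fec})).
\]
It therefore suffices to prove that the two outer sets coincide, i.e.\ that $\gam{fec}\modeq\ext(\gam{fec})$; then all three admissible sets are equal and in particular $\adm(\gam{fec})=\adm(\bfec)$, which is exactly the assertion $\gam{fec}\modeq\bfec$.

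For $\gam{fec}\modeq\ext(\gam{fec})$ I would invoke the observation recorded just after Lemma~\ref{lem:gamma-equiv-dom} that $\Gamma\modeq\ext(\dom(\Gamma))$ for any family $\Gamma$, applied to $\Gamma=\gam{fec}$, together with the fact that $\gam{fec}$ is \emph{recessive}, i.e.\ $\dom(\gam{fec})=\gam{fec}$. The inclusion $\gam{fec}\subseteq\dom(\gam{fec})$ is automatic, and the reverse holds because adding a nonnegative vector to a fractional edge cover, or taking a convex combination of fractional edge covers, produces a vector $\tilde\gamma$ with $\tilde\gamma(\delta(v))\ge1$ for every $v$. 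Hence $\ext(\dom(\gam{fec}))=\ext(\gam{fec})$ and $\gam{fec}=\dom(\gam{fec})\modeq\ext(\dom(\gam{fec}))=\ext(\gam{fec})$. To make this fully rigorous one should note that $\gam{fec}$, being carved out of $\mathbb{R}^E_{\ge 0}$ by the finitely many linear inequalities $\gamma(\delta(v))\ge1$, is a pointed polyhedron; hence it has finitely many extreme points and, by the Minkowski--Weyl decomposition with recession cone $\mathbb{R}^E_{\ge 0}$, genuinely satisfies $\dom(\ext(\gam{fec}))=\gam{fec}$. As a byproduct this confirms that $\bfec$ is a finite family (its members have entries in $\{0,1/2,1\}$), so the modulus of $\gam{fec}$ can be computed via the finite convex program~\eqref{eq:optimization_prob}.

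I expect no serious obstacle: the content is packaged in Lemmas~\ref{lem:adm-monotone}, \ref{lem:gamma-equiv-dom}, and~\ref{lem:extreme-bfec}, and the only place to be careful is the polyhedral bookkeeping---verifying that $\gam{fec}$ is pointed with recession cone $\mathbb{R}^E_{\ge 0}$ so that $\ext(\gam{fec})$ is a bona fide finite family equivalent to $\gam{fec}$.
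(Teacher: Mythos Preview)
Your proposal is correct and follows essentially the same route as the paper: both arguments use the chain $\ext(\gam{fec})\subseteq\bfec\subseteq\gam{fec}$ from Lemma~\ref{lem:extreme-bfec} and Definition~\ref{def:bfec}, reverse it via Lemma~\ref{lem:adm-monotone}, and close the loop with $\adm(\ext(\gam{fec}))=\adm(\gam{fec})$, which the paper had already recorded just before Definition~\ref{def:bfec} as a consequence of $\gam{fec}$ being recessive. Your added polyhedral bookkeeping (pointedness, recession cone $\mathbb{R}^E_{\ge 0}$, finiteness of $\ext(\gam{fec})$) is a welcome expansion of that earlier remark but does not change the strategy.
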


\begin{proof}
Lemma~\ref{lem:extreme-bfec} implies that $\ext(\gam{fec})\subseteq\bfec\subseteq\gam{fec}$, which shows that
$\adm(\gam{fec})\subseteq\adm(\bfec)\subseteq\adm(\ext(\gam{fec}))=\adm(\gam{fec})$.
\end{proof}

Figures~\ref{fig:W5fec} and~\ref{fig:W6fec} demonstrate the implication of Theorem~\ref{thm:fec-bfec-equiv}. These figures show all extreme points (up to rotation and reflection) of $\gam{fec}$ for the wheel graphs $W_5$ and $W_6$ respectively. Note that (as guaranteed by the theorem) all are basic fractional edge covers. Any density that is admissible for one of these sets is admissible for all fractional edge covers on the corresponding graph.

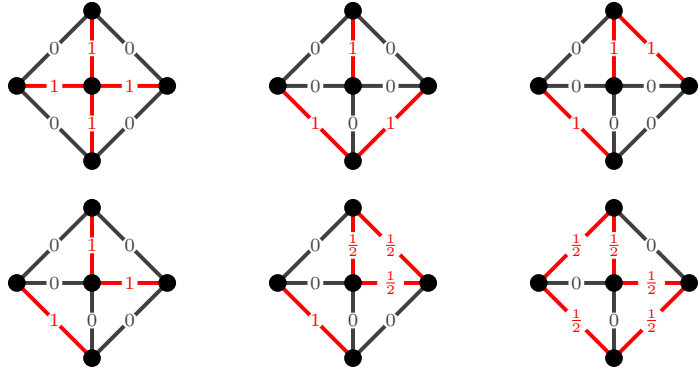
\begin{figure}

    \begin{center}
        \begin{tikzpicture}
        \Vertex[x=0,y=0,color=black,size=0.2]{0}
        \Vertex[x=0,y=1,color=black,size=0.2]{1}
        \Vertex[x=1,y=0,color=black,size=0.2]{2}
        \Vertex[x=0,y=-1,color=black,size=0.2]{3}
        \Vertex[x=-1,y=0,color=black,size=0.2]{4}
        \Edge[label=1,color=red](0)(1)
        \Edge[label=1,color=red](0)(2)
        \Edge[label=1,color=red](0)(3)
        \Edge[label=1,color=red](0)(4)
        \Edge[label=0](2)(1)
        \Edge[label=0](3)(2)
        \Edge[label=0](4)(1)
        \Edge[label=0](3)(4)
        \end{tikzpicture}
        \hspace{1cm}
        \begin{tikzpicture}
        \Vertex[x=0,y=0,color=black,size=0.2]{0}
        \Vertex[x=0,y=1,color=black,size=0.2]{1}
        \Vertex[x=1,y=0,color=black,size=0.2]{2}
        \Vertex[x=0,y=-1,color=black,size=0.2]{3}
        \Vertex[x=-1,y=0,color=black,size=0.2]{4}
        \Edge[label=1,color=red](0)(1)
        \Edge[label=0](0)(2)
        \Edge[label=0](0)(3)
        \Edge[label=0](0)(4)
        \Edge[label=0](2)(1)
        \Edge[label=1,color=red](3)(2)
        \Edge[label=0](4)(1)
        \Edge[label=1,color=red](3)(4)
        \end{tikzpicture}
        \hspace{1cm}
        \begin{tikzpicture}
        \Vertex[x=0,y=0,color=black,size=0.2]{0}
        \Vertex[x=0,y=1,color=black,size=0.2]{1}
        \Vertex[x=1,y=0,color=black,size=0.2]{2}
        \Vertex[x=0,y=-1,color=black,size=0.2]{3}
        \Vertex[x=-1,y=0,color=black,size=0.2]{4}
        \Edge[label=1,color=red](0)(1)
        \Edge[label=0](0)(2)
        \Edge[label=0](0)(3)
        \Edge[label=0](0)(4)
        \Edge[label=1,color=red](2)(1)
        \Edge[label=0](3)(2)
        \Edge[label=0](4)(1)
        \Edge[label=1,color=red](3)(4)
        \end{tikzpicture}
    \end{center}
    \begin{center}
        \begin{tikzpicture}
        \Vertex[x=0,y=0,color=black,size=0.2]{0}
        \Vertex[x=0,y=1,color=black,size=0.2]{1}
        \Vertex[x=1,y=0,color=black,size=0.2]{2}
        \Vertex[x=0,y=-1,color=black,size=0.2]{3}
        \Vertex[x=-1,y=0,color=black,size=0.2]{4}
        \Edge[label=1, color=red](0)(1)
        \Edge[label=1,color=red](0)(2)
        \Edge[label=0](0)(3)
        \Edge[label=0](0)(4)
        \Edge[label=0](2)(1)
        \Edge[label=0](3)(2)
        \Edge[label=0](4)(1)
        \Edge[label=1, color=red](3)(4)
        \end{tikzpicture}
        \hspace{1cm}
        \begin{tikzpicture}
        \Vertex[x=0,y=0,color=black,size=0.2]{0}
        \Vertex[x=0,y=1,color=black,size=0.2]{1}
        \Vertex[x=1,y=0,color=black,size=0.2]{2}
        \Vertex[x=0,y=-1,color=black,size=0.2]{3}
        \Vertex[x=-1,y=0,color=black,size=0.2]{4}
        \Edge[label=$\frac{1}{2}$,color=red](0)(1)
        \Edge[label=$\frac{1}{2}$,color=red](0)(2)
        \Edge[label=0](0)(3)
        \Edge[label=0](0)(4)
        \Edge[label=$\frac{1}{2}$,color=red](2)(1)
        \Edge[label=0](3)(2)
        \Edge[label=0](4)(1)
        \Edge[label=1,color=red](3)(4)
        \end{tikzpicture}
        \hspace{1cm}
        \begin{tikzpicture}
        \Vertex[x=0,y=0,color=black,size=0.2]{0}
        \Vertex[x=0,y=1,color=black,size=0.2]{1}
        \Vertex[x=1,y=0,color=black,size=0.2]{2}
        \Vertex[x=0,y=-1,color=black,size=0.2]{3}
        \Vertex[x=-1,y=0,color=black,size=0.2]{4}
        \Edge[label=$\frac{1}{2}$,color=red](0)(1)
        \Edge[label=$\frac{1}{2}$,color=red](0)(2)
        \Edge[label=0](0)(3)
        \Edge[label=0](0)(4)
        \Edge[label=0](2)(1)
        \Edge[label=$\frac{1}{2}$,color=red](3)(2)
        \Edge[label=$\frac{1}{2}$,color=red](4)(1)
        \Edge[label=$\frac{1}{2}$,color=red](3)(4)
        \end{tikzpicture}
    \end{center}
        
\caption{Extreme points of $\gam{fec}$ (up to rotation and reflection) for $G = W_5$. There are 25 extreme points in total.}
\label{fig:W5fec}

\end{figure}

\begin{figure}
        \begin{center}
        \includegraphics[width=0.8in]{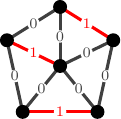}
        \hspace{0.6cm}
        \includegraphics[width=0.8in]{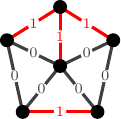}
        \hspace{0.6cm}
        \includegraphics[width=0.8in]{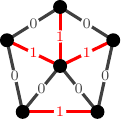}
        \hspace{0.6cm}
        \includegraphics[width=0.8in]{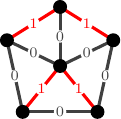}
        \end{center}
        \begin{center}
        \includegraphics[width=0.8in]{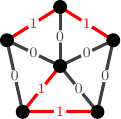}
        \hspace{0.6cm}
        \includegraphics[width=0.8in]{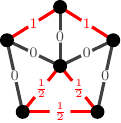}
        \hspace{0.6cm}
            \includegraphics[width=0.8in]{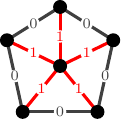}
        \end{center}
    \caption{Extreme points of $\gam{fec}$ (up to rotation and reflection) for $G = W_6$. There are 36 extreme points in total.}
    \label{fig:W6fec}
\end{figure}

\begin{corollary}\label{cor:bipartite-equiv}
If $G$ is a bipartite graph, then $\gam{fec}\modeq\gam{ec}$.
\end{corollary}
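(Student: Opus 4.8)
The plan is to obtain this immediately from Theorem~\ref{thm:fec-bfec-equiv} together with the combinatorial restriction that bipartiteness places on Definition~\ref{def:bfec}. The inclusion $\gam{ec}\subseteq\gam{fec}$ always holds (identifying each edge cover $C$ with its incidence vector $\mathbbm{1}_C$, as in Section~\ref{sec:defs}), so Lemma~\ref{lem:adm-monotone} already gives $\adm(\gam{fec})\subseteq\adm(\gam{ec})$. Hence the only thing to prove is the reverse inclusion $\adm(\gam{ec})\subseteq\adm(\gam{fec})$, and by Theorem~\ref{thm:fec-bfec-equiv} this is the same as $\adm(\gam{ec})\subseteq\adm(\bfec)$. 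By Lemma~\ref{lem:adm-monotone} once more, it therefore suffices to establish the containment of families $\bfec\subseteq\gam{ec}$.

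Next I would unpack Definition~\ref{def:bfec} under the standing hypothesis that $G$ is bipartite. A bipartite graph contains no odd cycles, so for any $\gamma\in\bfec$ the support $\supp\gamma$, which the definition requires to be a vertex-disjoint union of odd cycles and substars, must in fact be a vertex-disjoint union of substars alone. The third bullet of the definition then forces the value $1/2$ to be taken on no edge, so $\gamma$ is $\{0,1\}$-valued, i.e. $\gamma=\mathbbm{1}_{\supp\gamma}$. Since $\gamma$ is by assumption a fractional edge cover, we have $|\supp\gamma\cap\delta(v)|=\mathbbm{1}_{\supp\gamma}(\delta(v))\ge 1$ for every $v\in V$, which is exactly the statement that $\supp\gamma$ is an edge cover of $G$. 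Thus $\bfec\subseteq\gam{ec}$ as subsets of $\mathbb{R}^E_{\ge 0}$.

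Combining the pieces yields $\adm(\gam{ec})\subseteq\adm(\bfec)=\adm(\gam{fec})\subseteq\adm(\gam{ec})$, so all three sets of admissible densities coincide and in particular $\gam{fec}\modeq\gam{ec}$. Equivalently, the whole argument can be summarized as the chain of family inclusions $\bfec\subseteq\gam{ec}\subseteq\gam{fec}$ together with the equivalence $\bfec\modeq\gam{fec}$ of Theorem~\ref{thm:fec-bfec-equiv}, which sandwiches $\gam{ec}$ between two equivalent families. I do not expect a genuine obstacle here; the only points requiring care are the bookkeeping of the direction in which each inclusion and each admissibility containment runs, and the routine identification of an edge cover $C\subseteq E$ with its vector $\mathbbm{1}_C$ so that $\gam{ec}$, $\bfec$, and $\gam{fec}$ are all compared inside the same ambient space $\mathbb{R}^E_{\ge 0}$.
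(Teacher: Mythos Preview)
Your proof is correct and follows essentially the same approach as the paper: you show that in a bipartite graph every $\gamma\in\bfec$ is $\{0,1\}$-valued (since there are no odd cycles) and hence an edge cover, giving $\bfec\subseteq\gam{ec}\subseteq\gam{fec}$, and then invoke Theorem~\ref{thm:fec-bfec-equiv} to conclude. The paper's version is just a terser statement of the same argument.
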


\begin{proof}
    Let $\gamma\in\bfec$.  Since a bipartite graph has no odd cycles, $\gamma$ must take values in $\{0,1\}$. Moreover, $\gamma(\delta(v))\ge 1$ for every $v\in V$ and, therefore, $\gamma$ is (the incidence vector of) an edge cover.
\end{proof}

\subsection{Computing $\Mod_{p}(\gam{fec})$}

We now revisit the graphs we studied at the beginning of this section and calculate the unweighted 2-modulus of fractional edge covers.  By Theorem \ref{thm:fec-bfec-equiv}, we have that $\gam{fec} \modeq \bfec$. Therefore, calculating $\Mod_2(\gam{fec})$ is equivalent to calculating $\Mod_2(\bfec)$.

\begin{example}[Star Graph]\label{ex:star-fec}
Let $G = S_n$ be the unweighted star graph. Note that $S_n$ is a bipartite graph, so by Corollary \ref{cor:bipartite-equiv}, $\gam{ec} \modeq \gam{fec}$ and 
\[
\Mod_2(\gam{fec}) = \Mod_2(\gam{ec}) =  \frac{1}{n-1}.
\]
\end{example}

\begin{example}[Cycle Graph]\label{ex:cycle-fec}
Let $G = C_n$ be the unweighted cycle graph. To calculate the fractional edge cover modulus, we again consider the cases when $n$ is even or odd. 
\begin{itemize}
    \item[(a)] If $n$ is even, then the graph is bipartite. Again, by Corollary \ref{cor:bipartite-equiv}, $\gam{ec} \modeq \gam{fec}$ and 
    \[
    \Mod_2(\gam{fec}) = \Mod_2(\gam{ec}) = \frac{4}{n}.
    \]

    \item[(b)] If $n$ is odd, then $\bfec$ is the union of all the minimal edge covers and the constant vector $\gamma \equiv 1/2$.
    It is straightforward to verify that the constant density $\rho_0 \equiv \frac{2}{n}$ is admissible for this family. Thus,
    \[
    \Mod_2(\Gamma) \le \cE_2(\rho_0) = \sum_{e \in E} \rho_0(e)^2 = n \cdot \frac{4}{n^2} = \frac{4}{n}.
    \]
    The corresponding lower bound is established in Example~\ref{ex:cycle-star} of Section~\ref{sec:star_mod}.
\end{itemize}
\end{example}

\begin{example}[Complete Graph]\label{ex:complete-fec}
    Let $G = K_n$ be the unweighted complete graph with $|V| = n$ and $|E| = \frac{n(n-1)}{2}$. In this example, the number of basic fractional edge covers grows quickly with $n$, and is difficult to visualize. Nevertheless, we can show that $\rho_0 = \frac{2}{n}$ is the extremal density and that the modulus is 
    \[
    \Mod_2(\gam{fec}) = \frac{2(n-1)}{n}.
    \]
    This is proved in Example~\ref{ex:complete-star} in Section~\ref{sec:star_mod}.
\end{example}

\subsection{Bounds on the modulus of edge covers}

One reason that fractional edge covers are interesting in the context of modulus comes from the following theorem.

\begin{theorem}\label{thm:bounds}
For all $p\in[1,\infty)$
\[ 
\left( \frac{3}{4} \right)^p \Mod_{p,\sigma}(\gam{fec})
\leq \Mod_{p,\sigma}(\gam{ec})
\leq \Mod_{p,\sigma}(\gam{fec}).
\]
If $G$ is bipartite then $\Mod_{p,\sigma}(\gam{ec})
= \Mod_{p,\sigma}(\gam{fec})$.
\end{theorem}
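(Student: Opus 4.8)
The two easy assertions I would dispose of first. The upper bound $\Mod_{p,\sigma}(\gam{ec})\le\Mod_{p,\sigma}(\gam{fec})$ is just the monotonicity property applied to the inclusion $\gam{ec}\subseteq\gam{fec}$. The bipartite case is immediate from Corollary~\ref{cor:bipartite-equiv}: there $\gam{fec}\modeq\gam{ec}$, so $\adm(\gam{fec})=\adm(\gam{ec})$ and the two moduli coincide for every $p$ and $\sigma$.

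The real content is the lower bound, which I would deduce from the following claim: \emph{if $\rho\in\adm(\gam{ec})$ then $\tfrac43\rho\in\adm(\gam{fec})$.} Granting this, choose an extremal density $\rho^*$ for $\gam{ec}$ (which exists because $\gam{ec}$ is finite, by Theorem~4.1 of~\cite{albin2017modulus}). Then $\tfrac43\rho^*\in\adm(\gam{fec})$, and since $\cE_{p,\sigma}$ is positively $p$-homogeneous,
\[
\Mod_{p,\sigma}(\gam{fec})\le\cE_{p,\sigma}\!\left(\tfrac43\rho^*\right)=\left(\tfrac43\right)^{p}\cE_{p,\sigma}(\rho^*)=\left(\tfrac43\right)^{p}\Mod_{p,\sigma}(\gam{ec}),
\]
which rearranges to $(\tfrac34)^p\Mod_{p,\sigma}(\gam{fec})\le\Mod_{p,\sigma}(\gam{ec})$.

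To prove the claim, by Theorem~\ref{thm:fec-bfec-equiv} it suffices to verify $\rho^{T}\gamma\ge\tfrac34$ for every basic fractional edge cover $\gamma$. Fix such a $\gamma$ and set $H=\supp\gamma$. By Definition~\ref{def:bfec}, $H$ is a vertex-disjoint union of odd cycles $O_1,\dots,O_k$ (with $\gamma\equiv\tfrac12$ on each) and substars $S_1,\dots,S_\ell$ (with $\gamma\equiv 1$ on each); moreover $\gamma(\delta(v))\ge 1>0$ for each $v$, so every vertex of $G$ meets $H$ and hence the vertex sets of $O_1,\dots,O_k,S_1,\dots,S_\ell$ partition $V$. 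Each substar $S_j$ already covers all vertices of its component. For an odd cycle $O_i$ with $2m_i+1$ edges, deleting one edge leaves a path on $2m_i+1$ vertices, which has an edge cover of size $m_i+1$ using only edges of $O_i$; call it $A_i$, and let $A_i^{(0)},\dots,A_i^{(2m_i)}$ be its $2m_i+1$ cyclic rotations, each again an $(m_i+1)$-edge subset of $O_i$ covering every vertex of $O_i$. For any indices $t_1,\dots,t_k$ the set $C(t_1,\dots,t_k)=\bigcup_i A_i^{(t_i)}\cup\bigcup_j S_j$ covers all of $V$ and is therefore an edge cover of $G$, so $\rho^{T}\mathbbm 1_{C(t_1,\dots,t_k)}\ge 1$. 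Averaging this inequality over all $\prod_i(2m_i+1)$ index choices, and using that each edge of $O_i$ lies in exactly $m_i+1$ of its $2m_i+1$ rotations (the rotation action on the $2m_i+1$ edges is free and transitive), yields
\[
1\le \sum_{j=1}^{\ell}\rho(S_j)+\sum_{i=1}^{k}\frac{m_i+1}{2m_i+1}\,\rho(O_i),
\]
where $\rho(F):=\sum_{e\in F}\rho(e)$. Since $m_i\ge 1$ gives $\tfrac{m_i+1}{2m_i+1}\le\tfrac23$, and since $1\le\tfrac43$ with $\rho\ge 0$, the right-hand side is at most $\tfrac43\bigl(\sum_j\rho(S_j)+\tfrac12\sum_i\rho(O_i)\bigr)=\tfrac43\,\rho^{T}\gamma$. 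Hence $\rho^{T}\gamma\ge\tfrac34$, proving the claim.

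The main obstacle is the combinatorial core of the last paragraph: noticing that the support of a basic fractional edge cover, although not itself a minimal edge cover, tiles $V$ by odd cycles and substars, and that rotation-averaging a near-minimal cover of each odd cycle keeps every edge weight at most $\tfrac23$. The triangle case $m_i=1$, where $\tfrac{m_i+1}{2m_i+1}=\tfrac23$, is exactly what forces the constant $\tfrac34$, so this argument is sharp.
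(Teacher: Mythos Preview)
Your proof is correct and follows essentially the same route as the paper: monotonicity for the upper bound, Corollary~\ref{cor:bipartite-equiv} for the bipartite case, and for the lower bound the key claim $\rho\in\adm(\gam{ec})\Rightarrow\tfrac43\rho\in\adm(\bfec)$ proved by rotation-averaging edge covers on the odd-cycle components of a basic fractional edge cover. The only cosmetic difference is that the paper packages the cycle estimate as two separate lemmas (bounding a minimal cycle cover by $2k/3$ edges and then averaging the $\rho$-shortest one), whereas you build an explicit $(m_i+1)$-edge cover of each $(2m_i+1)$-cycle and average directly; both yield the same $\tfrac{m_i+1}{2m_i+1}\le\tfrac23$ ratio that forces the constant $\tfrac34$.
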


The second inequality in these bounds follows from the inclusion $\gam{ec} \subseteq \gam{fec}$. The other bound follows from a more careful look at cycles.

First, we establish an upper bound on the number of edges in a minimal edge cover for a cycle.

\begin{lemma}\label{lem:cycle-cover-upper-bound}
    Let $C$ be a cycle with length $|C|=k$ and let $\gamma$ be a minimal edge cover of $C$. Then $|\gamma|\le\frac{2k}{3}$.
\end{lemma}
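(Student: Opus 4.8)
The plan is to analyze the local structure of a minimal edge cover of a cycle and then do a short count. Write $C$ as a cyclic sequence of its $k$ edges $e_1,e_2,\dots,e_k$ (indices read mod $k$), and call an edge \emph{present} if it lies in $\gamma$ and \emph{absent} otherwise. We may assume $k\ge 3$. I would first establish two structural facts: (a) no two cyclically consecutive edges are both absent, and (b) no three cyclically consecutive edges are all present. Fact (a) is immediate: if $e_i$ and $e_{i+1}$ were both absent, the vertex they share would be incident to no edge of $\gamma$, contradicting that $\gamma$ is an edge cover. Fact (b) is where minimality enters: if $e_{i-1},e_i,e_{i+1}$ are all present, then the two endpoints of $e_i$ are covered, respectively, by $e_{i-1}$ and by $e_{i+1}$, so $\gamma\setminus\{e_i\}$ is still an edge cover (no other vertex's coverage changes), contradicting minimality of $\gamma$.

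Given (a) and (b), I would decompose the cyclic sequence of edges into maximal runs of present edges alternating with maximal runs of absent edges. Since $\gamma$ is an edge cover it is nonempty, and by (b) it is not all of $C$ (three consecutive present edges would otherwise occur, as $k\ge 3$); hence there is a positive number $b$ of present runs, and on a cycle the number of absent runs is also $b$. By (a) each absent run has length exactly $1$, so there are exactly $b$ absent edges, giving $k=|\gamma|+b$. By (b) each present run has length at most $2$, so $|\gamma|\le 2b$. Combining these, $|\gamma|\le 2b = 2(k-|\gamma|)$, which rearranges to $|\gamma|\le \tfrac{2k}{3}$.

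I expect the only genuine obstacle to be the clean formulation and justification of fact (b) — recognizing that the middle edge of any three consecutive present edges is redundant in the cover — after which the remainder is just the alternating-runs count. A minor point to be careful about is the degenerate short-cycle regime and the possibility $\gamma=C$, but for $k\ge 3$ fact (b) already excludes the latter, and the bound is attained exactly when $3\mid k$ with every present run of length $2$ (equivalently, every absent edge isolated), which serves as a useful sanity check and matches the value used in the proof of Theorem~\ref{thm:bounds}.
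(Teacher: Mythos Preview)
Your proof is correct. The paper's argument is organized a bit differently: rather than decomposing the cyclic edge sequence into runs of present/absent edges, it classifies the vertices of $C$ by their degree in $\gamma$ (each is $1$ or $2$), observes that minimality forces every degree-$2$ vertex to have two degree-$1$ neighbors (this is exactly your fact~(b), phrased at the vertex level: an edge joining two degree-$2$ vertices could be deleted), deduces that the number $k_2$ of degree-$2$ vertices satisfies $k_2\le k/3$, and then applies the handshaking lemma to get $|\gamma|=(k+k_2)/2\le 2k/3$. Your edge-run decomposition and the paper's vertex-degree count are dual bookkeeping devices resting on the same minimality observation; neither is materially shorter or more general, and both deliver the bound in a couple of lines.
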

\begin{proof}
    Each vertex of $C$ has degree $1$ or $2$ in $\gamma$. Let $k_1$ be the number of vertices with degree $1$ and $k_2=k-k_1$ be the vertices with degree $2$. Since $\gamma$ is minimal, each degree-2 vertex is connected in $\gamma$ to two degree-1 neighbors. (An edge connecting two degree-2 vertices could be removed from $\gamma$ leaving an edge cover and violating the minimality of $\gamma$.) This implies that $k_2\le\frac{k}{3}$. The handshaking lemma then implies that
    \begin{equation*}
        |\gamma| = \frac{k_1+2k_2}{2} = \frac{k+k_2}{2} \le \frac{2k}{3}.
    \end{equation*}
\end{proof}

The key estimate is the following.

\begin{lemma}\label{lem:rho-len-odd-cycle}
    Let $C\subset E$ be a cycle, let $\gam{ec}(C)$ be the set of edge covers for $C$, and let $\rho\in\mathbb{R}^C_{\ge 0}$. Then
    \begin{equation*}
        \frac{1}{2}\sum_{e\in C}\rho(e)
        \ge \frac{3}{4}\min_{\gamma\in\gam{ec}(C)}\ell_\rho(\gamma)
    \end{equation*}
\end{lemma}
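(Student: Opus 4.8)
The plan is to reduce the inequality to a purely combinatorial statement about covering a cycle with arcs. Write $k=|C|$ and label the edges $e_1,\dots,e_k$ cyclically. Observe that an edge cover of the cycle $C$ is exactly a choice of edges such that no two consecutive vertices are both left uncovered — equivalently, a set of edges whose complement contains no two adjacent edges (no "gap" of length $\ge 2$ in the missing edges). Rather than work with all edge covers, I would first reduce to minimal ones: since $\ell_\rho$ is linear and $\rho\ge 0$, the minimum of $\ell_\rho(\gamma)$ over $\gam{ec}(C)$ is attained at a minimal edge cover, so it suffices to prove $\frac12\sum_{e\in C}\rho(e)\ge\frac34\ell_\rho(\gamma)$ for the \emph{best} minimal edge cover $\gamma$; equivalently, I must exhibit \emph{some} edge cover $\gamma$ with $\ell_\rho(\gamma)\le\frac{2}{3}\sum_{e\in C}\rho(e)$.

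The key idea is an averaging argument over a family of cheap covers. A minimal edge cover of a $k$-cycle, by the structure in Lemma~\ref{lem:cycle-cover-upper-bound}, corresponds to a partition of the cycle into blocks that are either a single edge (a degree-1/degree-1 edge) or a path of two edges (covering a degree-2 vertex); to cover everything one needs the two-edge blocks to be spaced so each accounts for 3 consecutive vertices. Concretely, when $3\mid k$ there is a canonical cover using $2k/3$ edges obtained by deleting every third edge $e_3, e_6,\dots$, and there are $3$ rotations of this pattern; the three deleted sets $\{e_3,e_6,\dots\}$, $\{e_1,e_4,\dots\}$, $\{e_2,e_5,\dots\}$ partition $E(C)$. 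Hence the three corresponding covers $\gamma_1,\gamma_2,\gamma_3$ satisfy $\sum_{j=1}^3\ell_\rho(\gamma_j)=\sum_{j}\bigl(\sum_{e\in C}\rho(e)-\rho(\text{deleted}_j)\bigr)=3\sum_{e}\rho(e)-\sum_e\rho(e)=2\sum_{e\in C}\rho(e)$, so the cheapest of the three has $\rho$-length at most $\frac23\sum_{e\in C}\rho(e)$, which is exactly what is needed. For $k$ not divisible by $3$ I would handle the residual edges by a small modification: delete a maximal set of pairwise-nonadjacent edges spaced as close to every-third as possible, in three cyclic shifts, and absorb the $1$ or $2$ leftover edges (which must be kept in all three covers) into the count; a short case check on $k\bmod 3$ shows the average $\rho$-length of the three covers is still at most $\frac23\sum_{e\in C}\rho(e)$, because the leftover edges contribute their full weight to at most all three covers while the remaining $3\lfloor k/3\rfloor$ edges are each deleted from exactly one cover.

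The main obstacle is the case analysis when $k\not\equiv 0\pmod 3$: one must be careful that the "every third edge deleted, three shifts" scheme still deletes each of $3\lfloor k/3\rfloor$ chosen edges in exactly one of the three covers while never deleting two adjacent edges in the same cover (so that each $\gamma_j$ is genuinely an edge cover). I expect this to go through by choosing the deleted pattern on a length-$3\lfloor k/3\rfloor$ arc and keeping the remaining $k-3\lfloor k/3\rfloor\in\{1,2\}$ edges in every cover; the three shifts then still satisfy $\sum_{j=1}^3\ell_\rho(\gamma_j)\le 2\sum_{e\in C}\rho(e)$ because those few always-kept edges are counted at most $3$ times total, matching the bound. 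Once the averaging inequality $\min_j\ell_\rho(\gamma_j)\le\frac23\sum_{e\in C}\rho(e)$ is in hand, the statement of the lemma follows immediately by rearranging, since $\min_{\gamma\in\gam{ec}(C)}\ell_\rho(\gamma)\le\min_j\ell_\rho(\gamma_j)$.
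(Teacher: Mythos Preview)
Your averaging idea for the case $3\mid k$ is correct and clean: the three cyclic shifts of the ``delete every third edge'' cover partition the edges of $C$, so their $\rho$-lengths sum to exactly $2\sum_{e\in C}\rho(e)$ and the minimum is at most $\frac{2}{3}\sum_{e\in C}\rho(e)$.

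The gap is in the case $k\not\equiv 0\pmod 3$. With $r=k-3\lfloor k/3\rfloor\in\{1,2\}$ leftover edges kept in all three covers, the total is
\[
\sum_{j=1}^{3}\ell_\rho(\gamma_j)
=2\sum_{e\,\in\,\text{arc}}\rho(e)+3\sum_{e\,\in\,\text{leftover}}\rho(e)
=2\sum_{e\in C}\rho(e)+\sum_{e\,\in\,\text{leftover}}\rho(e),
\]
which exceeds $2\sum_{e\in C}\rho(e)$ whenever any leftover edge carries positive weight. So the average of your three covers can be strictly larger than $\frac{2}{3}\sum_{e\in C}\rho(e)$; for instance, on $C_4$ with $\rho$ concentrated entirely on the single leftover edge, all three $\gamma_j$'s have $\rho$-length equal to $\sum_{e\in C}\rho(e)$. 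The claim that the leftovers ``match the bound'' by being counted at most three times is where the arithmetic slips: three times is one too many, since every other edge is counted only twice.

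The paper sidesteps this by averaging over all $k$ cyclic rotations of a single minimal cover $\gamma'$ (taken to be the $\rho$-minimizer). Each edge of $C$ then lies in exactly $|\gamma'|$ of the $k$ rotations, so $\sum_{i=1}^{k}\ell_\rho(\gamma_i')=|\gamma'|\sum_{e\in C}\rho(e)$, and Lemma~\ref{lem:cycle-cover-upper-bound} gives $|\gamma'|\le\frac{2k}{3}$ uniformly in $k$. The average over the $k$ rotations is therefore at most $\frac{2}{3}\sum_{e\in C}\rho(e)$, with no residue cases to patch. Your scheme is salvaged by the same move: replace the three shifts by all $k$ rotations of any fixed minimal cover.
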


\begin{proof}
    Let $C$ be a cycle with length $|C|=k$. Since $\rho$ is non-negative, the minimum value on the right must be attained by a minimal edge cover. Let $\gamma' = \argmin_{\gamma \in \gam{ec}(C)} \ell_{\rho}(\gamma)$, and assume $\gamma'$ is minimal. By considering all rotations of the cycle, we obtain a set of $k$ rotations of $\gamma'$, which we may enumerate $\{\gamma_1',\gamma_2',\ldots,\gamma_k'\}$. Moreover, each edge of $C$ lies in exactly $|\gamma'|$ of these rotations, so, by Lemma~\ref{lem:cycle-cover-upper-bound},
    \begin{equation*}
        \frac{3}{4k}\sum_{i=1}^k\gamma_i = \frac{3}{4k}|\gamma'| \mathbf{1} \le \frac{3}{4k}\frac{2k}{3} \mathbf{1} = \frac{1}{2}\mathbf{1}.
    \end{equation*}
    From this, it follows that
    \begin{equation*}
        \frac{1}{2}\sum_{e\in C}\rho(e)
        =\frac{1}{2}\rho^T\mathbf{1}
        \ge \frac{3}{4k}\sum_{i=1}^k\ell_\rho(\gamma_i)
        \ge \frac{3}{4}\min_{\gamma\in\gam{ec}(C)}\ell_\rho(\gamma).
    \end{equation*}  
\end{proof}

\begin{lemma}\label{lem:almost-admissible}
Let $\rho \in \adm(\gam{ec})$, then $\displaystyle \frac{4}{3}\rho \in \adm(\bfec)$.
\end{lemma}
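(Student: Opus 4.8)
The plan is to prove the equivalent statement that $\ell_\rho(\gamma)\ge\frac34$ for every $\gamma\in\bfec$; multiplying through by $\frac43$ then yields $\ell_{\frac43\rho}(\gamma)\ge 1$ for all $\gamma\in\bfec$, which is exactly $\frac43\rho\in\adm(\bfec)$. So fix $\gamma\in\bfec$. By Definition~\ref{def:bfec}, the edge set $H:=\supp\gamma$ is a vertex-disjoint union of substar components (on which $\gamma\equiv 1$, since those edges lie in no odd cycle) and odd-cycle components (on which $\gamma\equiv\frac12$). Because $\gamma$ is a fractional edge cover, $\gamma(\delta(v))\ge 1>0$ for every $v\in V$, so every vertex of $G$ lies in one of these components; in particular the components of $H$ already span $V$.

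Next I would manufacture a genuine edge cover $\gamma^*\in\gam{ec}$ from $\gamma$ by keeping the substar components intact and replacing each odd-cycle component by a $\rho$-shortest edge cover of that cycle. Concretely, for each odd cycle $C$ in $H$, Lemma~\ref{lem:rho-len-odd-cycle} applied to $\rho$ restricted to $C$ furnishes an edge cover $\gamma_C\in\gam{ec}(C)$ with
\[
\ell_\rho(\gamma_C)\;\le\;\frac43\cdot\frac12\sum_{e\in C}\rho(e).
\]
Let $\gamma^*\subseteq E$ be the union of all substar edges of $H$ together with all the sets $\gamma_C$. Since the components of $H$ are vertex-disjoint and span $V$, since each retained substar covers its own vertices, and since each $\gamma_C$ covers the vertices of its cycle, every vertex of $G$ is incident to an edge of $\gamma^*$; hence $\gamma^*\in\gam{ec}$.

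Finally I would compare $\rho$-lengths. Writing $S$ for the set of substar edges of $H$ and letting $C$ range over the odd cycles of $H$,
\[
\ell_\rho(\gamma)=\sum_{e\in S}\rho(e)+\sum_C\frac12\sum_{e\in C}\rho(e),
\qquad
\ell_\rho(\gamma^*)=\sum_{e\in S}\rho(e)+\sum_C\ell_\rho(\gamma_C).
\]
Bounding $\sum_{e\in S}\rho(e)\le\frac43\sum_{e\in S}\rho(e)$ and using the displayed cycle estimate termwise gives $\ell_\rho(\gamma^*)\le\frac43\,\ell_\rho(\gamma)$. Since $\rho\in\adm(\gam{ec})$ and $\gamma^*\in\gam{ec}$, we get $1\le\ell_\rho(\gamma^*)\le\frac43\,\ell_\rho(\gamma)$, whence $\ell_\rho(\gamma)\ge\frac34$, as claimed.

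The one step requiring care is the verification that $\gamma^*$ covers all of $G$ and not merely $\supp\gamma$; this rests on the observation that a fractional edge cover's support necessarily meets every vertex, so its substar/odd-cycle decomposition already spans $V$. Everything else is bookkeeping, and the constant $\frac43$ comes out sharp precisely because Lemma~\ref{lem:rho-len-odd-cycle} costs a factor $\frac43$ on odd cycles while the substar edges are retained at no cost.
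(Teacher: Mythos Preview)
Your proposal is correct and follows essentially the same approach as the paper: construct an edge cover by keeping the substar edges of the basic fractional edge cover and replacing each odd cycle by a $\rho$-shortest edge cover of that cycle, then invoke Lemma~\ref{lem:rho-len-odd-cycle} on each cycle to obtain $\ell_\rho(\gamma^*)\le\frac43\ell_\rho(\gamma)$ and hence $\ell_\rho(\gamma)\ge\frac34$. Your write-up is arguably a little cleaner than the paper's in one respect: you explicitly verify that the components of $\supp\gamma$ span $V$ (using $\gamma(\delta(v))\ge 1$), which is what guarantees that your $\gamma^*$ is an edge cover of all of $G$; the paper asserts this ``by construction'' without spelling it out.
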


\begin{proof}
Let $\rho \in \adm(\gam{ec})$ and let $\gamma \in \Gamma_{\overline{fec}}$. From Definition~\ref{def:bfec}, we know that the support of $\gamma$ is a vertex-disjoint union of substars, $\mathscr{S} = \{S_1, \ldots, S_t\}$, and odd cycles, $\mathscr{C} =\{C_1, \ldots, C_r\}$. Moreover, $\gamma$ takes the value $1$ on each substar edge and $1/2$ on each cycle edge. From $\gamma$, we construct an edge cover, $\tilde{\gamma}\in\gam{ec}$ as follows.

For edges $e\notin\supp\gamma$, we set $\tilde{\gamma}(e)=0$. Similarly, we set $\tilde{\gamma}(e)=\gamma(e)=1$ for each edge $e\in\bigcup_{i=1}^tS_i$. The remaining edges lie on the disjoint union of the odd cycles in $\mathscr{C}$. On each of these odd cycles, we choose $\tilde{\gamma}$ to be the (incidence vector of the) minimal edge cover that has the smallest $\rho$-length. By construction, $\tilde{\gamma}$ is an edge cover for $G$. By admissibility, then, it follows that
\begin{equation}\label{eq:gamma-tilde-adm}
    1 \le \ell_\rho(\tilde{\gamma})
    = \sum_{C\in\mathscr{C}}\sum_{e\in C}\tilde{\gamma}(e)\rho(e) + 
    \sum_{S\in\mathscr{S}}\sum_{e\in S}\rho(e).
\end{equation}

Now consider the $\rho$-length of the basic fractional edge cover $\gamma$,
\begin{equation}\label{eq:rho-len-gamma}
    \ell_\rho(\gamma) =
    \sum_{C\in\mathscr{C}}\sum_{e\in C}\frac{1}{2}\rho(e) + 
    \sum_{S\in\mathscr{S}}\sum_{e\in S}\rho(e).
\end{equation}
By the construction of $\tilde{\gamma}$, Lemma~\ref{lem:rho-len-odd-cycle} implies that for every $C\in\mathscr{C}$,
\begin{equation}\label{eq:cycle-compare}
    \frac{1}{2}\sum_{e\in C}\rho(e)
    %\ge \frac{|C|}{|C|+1}\sum_{e\in C}\tilde{\gamma}(e)\rho(e)
    \ge \frac{3}{4}\sum_{e\in C}\tilde{\gamma}(e)\rho(e).
\end{equation}
Combining~\eqref{eq:gamma-tilde-adm}--\eqref{eq:cycle-compare} shows that
\begin{equation*}
    \ell_\rho(\gamma) \ge \frac{3}{4}.
\end{equation*}
Since $\gamma\in\bfec$ was arbitrary, it follows that $\frac{4}{3}\rho\in\adm(\bfec)$.

\end{proof}

\begin{proof}[Proof of Theorem~\ref{thm:bounds}]
If $G$ is bipartite, then the result follows from Corollary~\ref{cor:bipartite-equiv}. Otherwise, as stated earlier, the second inequality follows from the inclusion $\gam{ec}\subseteq\bfec$. The first inequality is a consequence of Lemma~\ref{lem:almost-admissible}. To see this, let $\rho^*$ be an extremal density for $\Mod_{p,\sigma}(\gam{ec})$. By the lemma, $\rho = \frac{4}{3}\rho^*\in\adm(\bfec)=\adm(\gam{fec})$. Thus, we have
\begin{equation*}
    \Mod_{p,\sigma}(\gam{fec})
    \le \mathcal{E}_{p,\sigma}(\rho)
    = \left(\frac{4}{3}\right)^p\mathcal{E}_{p,\sigma}(\rho^*) = \left(\frac{4}{3}\right)^p\Mod_{p,\sigma}(\gam{ec}).
\end{equation*}
\end{proof}

\begin{remark}
    A similar theorem is true for the case $p=\infty$. Using the same proof technique, one finds that
\[ 
\frac{3}{4} \Mod_{\infty, \sigma}(\gam{fec}) \le \Mod_{\infty, \sigma}(\gam{ec}) \le \Mod_{\infty, \sigma}(\gam{fec}).
\]    
\end{remark}

\section{Fulkerson Duality}\label{sec:fulkerson_duality}

The theory of Fulkerson Duality applied to modulus was developed in \cite{albin2019blocking}. If $\Gamma$ is a finite family, then the admissible set, $\adm(\Gamma)$, has finitely many faces and finitely many extreme points. Since $\adm(\Gamma)$ is a recessive closed convex set, it can be written as the dominant of its extreme points
\[
\adm(\Gamma) = \dom(\ext(\adm(\Gamma))).
\]
We define
\[
\hat{\Gamma} := \ext( \adm(\Gamma)) = \{ \hat{\gamma}_1, \ldots, \hat{\gamma}_r\} \subseteq \bR^E_{\ge 0}
\]
to be the \emph{Fulkerson blocker of $\Gamma$}. These extreme points can be thought of as another family of graph objects with usage matrix $\hat{\cN} \in \bR^{\hat{\Gamma} \times E}_{\ge 0}$. This construction provides a duality among families of objects due to the fact that
\begin{equation*}
\hat{\hat{\Gamma}} = \ext(\dom(\Gamma)) \modeq\Gamma.
\end{equation*}
The relationship between $\Gamma$ and $\hat{\Gamma}$ in terms of modulus is given by the following theorem.

\begin{theorem}[Theorem 4 in \cite{albin2019blocking}]\label{thm:mod_reciprocals}
    Let $G = (V,E)$ be a graph and let $\Gamma$ be a non-trivial finite family of objects on G with Fulkerson blocker $\hat{\Gamma}$. Let the exponent $1 < p < \infty$ be given, with $q := p/(p-1)$ its Hölder conjugate. For any set of weights $\sigma \in \bR_{>0}^E$, define the dual set of weights $\hat{\sigma}$ as $\hat{\sigma}(e) := \sigma(e)^{-\frac{q}{p}}$, for all $e \in E$. Then, 
    \begin{equation}\label{eq:mod_reciprocal}
        \Mod_{p,\sigma}(\Gamma)^{1/p}\Mod_{q,\hat{\sigma}}(\hat{\Gamma})^{1/q} = 1. 
    \end{equation} 

    Moreover, the optimal $\rho^* \in \adm(\Gamma)$ and $\eta^* \in \adm(\hat{\Gamma})$ are unique and are related as follows: 
    \begin{equation}
        \eta^*(e) = \frac{\sigma(e) \rho^*(e)^{p-1}}{\Mod_{p,\sigma}(\Gamma)} \qquad \forall e \in E.
    \end{equation}
    
\end{theorem}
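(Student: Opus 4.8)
This is the blocking-duality theorem for $p$-modulus, and the plan is to prove the two inequalities hidden in~\eqref{eq:mod_reciprocal} separately. First I would establish a pairing bound: for every $\rho\in\adm(\Gamma)$ and every $\eta\in\adm(\hat{\Gamma})$ one has $\rho^T\eta\ge 1$. This follows from the double-blocker identity $\adm(\hat{\Gamma})=\dom(\Gamma)$, which lets me write $\eta=\sum_i\mu_i\gamma_i+\xi$ with $\gamma_i\in\Gamma$, $\mu_i\ge 0$ summing to one, and $\xi\succeq 0$; admissibility of $\rho$ for $\Gamma$ then forces $\rho^T\eta\ge\sum_i\mu_i=1$. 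Factoring $\rho(e)\eta(e)=\bigl(\sigma(e)^{1/p}\rho(e)\bigr)\bigl(\sigma(e)^{-1/p}\eta(e)\bigr)$, applying Hölder with exponents $p$ and $q$, and using $\sigma(e)^{-q/p}=\hat{\sigma}(e)$, I get $1\le\cE_{p,\sigma}(\rho)^{1/p}\cE_{q,\hat{\sigma}}(\eta)^{1/q}$; taking infima over both admissible sets gives the ``$\ge 1$'' half of~\eqref{eq:mod_reciprocal}. This part is routine.

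For the reverse inequality I would construct an explicit density that is feasible for the dual problem, using the optimality conditions of the primal. Since $\Gamma$ is finite and nontrivial, problem~\eqref{eq:optimization_prob} is feasible with affine constraints and (for $1<p<\infty$) a strictly convex objective, so it has a unique minimizer $\rho^*$ and KKT multipliers $\lambda\succeq 0$ (indexed by $\Gamma$) and $\mu\succeq 0$ (indexed by $E$) with $p\sigma(e)\rho^*(e)^{p-1}=\sum_{\gamma}\lambda(\gamma)\cN(\gamma,e)+\mu(e)$ and complementary slackness. Dotting this identity with $\rho^*$ and invoking complementary slackness gives $\sum_{\gamma}\lambda(\gamma)=p\,\cE_{p,\sigma}(\rho^*)=p\Mod_{p,\sigma}(\Gamma)$, so $\nu:=\lambda/(p\Mod_{p,\sigma}(\Gamma))$ is a probability mass function on $\Gamma$. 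Dividing stationarity by $p\Mod_{p,\sigma}(\Gamma)$ and dropping the nonnegative $\mu$-term shows that $\eta^*(e):=\sigma(e)\rho^*(e)^{p-1}/\Mod_{p,\sigma}(\Gamma)$ dominates $\sum_{\gamma}\nu(\gamma)\cN(\gamma,\cdot)\in\convt(\Gamma)$, whence $\eta^*\in\dom(\Gamma)=\adm(\hat{\Gamma})$. A direct computation with $(p-1)q=p$ and $\sigma(e)^{\,q-q/p}=\sigma(e)$ gives $\cE_{q,\hat{\sigma}}(\eta^*)=\Mod_{p,\sigma}(\Gamma)^{1-q}$, so $\Mod_{q,\hat{\sigma}}(\hat{\Gamma})\le\Mod_{p,\sigma}(\Gamma)^{1-q}$, which is the ``$\le 1$'' half of~\eqref{eq:mod_reciprocal}. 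Combining the two halves gives the product formula; it also shows $\eta^*$ attains $\Mod_{q,\hat{\sigma}}(\hat{\Gamma})$, and since $1<q<\infty$ makes $\cE_{q,\hat{\sigma}}$ strictly convex, $\eta^*$ is the unique dual extremal density --- exactly the stated relation between $\rho^*$ and $\eta^*$, with uniqueness of $\rho^*$ being the strict convexity of $\cE_{p,\sigma}$ noted earlier.

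The hard part will be the KKT step: making sure the optimality conditions hold with honest finite multipliers and that the normalization $\sum_{\gamma}\lambda(\gamma)=p\Mod_{p,\sigma}(\Gamma)$ comes out exactly so that $\nu$ is a genuine pmf. Both are in fact fine here --- the constraints are affine and the feasible set is nonempty, so no constraint qualification beyond feasibility is required, and the normalization is forced by complementary slackness --- but this is where the argument carries its real content; everything else is the weighted Hölder inequality together with careful bookkeeping in the conjugate exponents $p$ and $q=p/(p-1)$.
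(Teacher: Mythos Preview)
The paper does not prove this theorem at all: it is quoted verbatim as ``Theorem~4 in~\cite{albin2019blocking}'' and used as a black box in Section~\ref{sec:fulkerson_duality}. So there is no paper proof to compare against.

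That said, your argument is correct and is essentially the standard one from the cited reference. The two halves---the Hölder bound on $\rho^T\eta\ge 1$ using $\adm(\hat{\Gamma})=\dom(\Gamma)$, and the KKT construction of $\eta^*$ with the normalization $\sum_\gamma\lambda(\gamma)=p\,\Mod_{p,\sigma}(\Gamma)$ forced by complementary slackness---are exactly how the result is proved in~\cite{albin2019blocking} (and, in a slightly different packaging, in~\cite{albin2016prob}). Your bookkeeping with the exponents, in particular $(p-1)q=p$ and $\sigma(e)^{q-q/p}=\sigma(e)$, is right, and your remark that affine constraints plus nonemptiness suffice for KKT (so no Slater-type qualification is needed) is the correct justification. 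One small point worth making explicit: nontriviality of $\Gamma$ guarantees $\Mod_{p,\sigma}(\Gamma)>0$, so the division defining $\nu$ and $\eta^*$ is legitimate.
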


The relationship of the modulus of the families when $p=1$ and $p=\infty$ are described using the following theorem. 

\begin{theorem}[Thereom 5 in \cite{albin2019blocking}]
\label{thm:mod_reciprocals-1-inf}
    Under the assumptions of Theorem \ref{thm:mod_reciprocals}, 
    \[ 
    \Mod_{1,\sigma}(\Gamma)\Mod_{\infty,\sigma^{-1}} (\hat{\Gamma}) = 1,
    \]
    where $\sigma^{-1}(e) = \sigma(e)^{-1}$.    
\end{theorem}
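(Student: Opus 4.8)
The plan is to prove this as the degenerate endpoint $p=1$, $q=\infty$ of the $\ell^p$ duality, using linear‑programming duality directly rather than a limiting argument. First I would rewrite $\Mod_{1,\sigma}(\Gamma)$ as a linear program. Since $\Gamma$ is finite with usage matrix $\cN$, we have $\adm(\Gamma)=\{\rho\succeq 0:\cN\rho\succeq\mathbf{1}\}$, so
\[
\Mod_{1,\sigma}(\Gamma)=\min\{\sigma^T\rho:\ \cN\rho\succeq\mathbf{1},\ \rho\succeq 0\}.
\]
This primal is feasible (take $\rho$ large) and bounded below by $0$ (as $\sigma,\rho\succeq 0$), so strong LP duality applies and
\[
\Mod_{1,\sigma}(\Gamma)=\max\{\mathbf{1}^T\lambda:\ \cN^T\lambda\preceq\sigma,\ \lambda\succeq 0\},
\]
the maximum attained at some $\lambda^*\in\bR^{\Gamma}_{\ge 0}$, where $(\cN^T\lambda)(e)=\sum_{\gamma\in\Gamma}\lambda(\gamma)\cN(\gamma,e)$. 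On the dual side, I would use the blocking‑polyhedron fact already invoked in the excerpt: $\adm(\hat\Gamma)=\dom(\hat{\hat\Gamma})=\dom(\Gamma)=\convt(\Gamma)+\bR^E_{\ge 0}$ (identifying $\Gamma$ with the rows of $\cN$). Since $\mathcal{E}_{\infty,\sigma^{-1}}(\eta)=\max_{e}\eta(e)/\sigma(e)$ is nondecreasing in each coordinate, the infimum over $\adm(\hat\Gamma)$ is attained on the compact set $\convt(\Gamma)$, hence
\[
\Mod_{\infty,\sigma^{-1}}(\hat\Gamma)=\min_{\mu}\ \max_{e\in E}\ \frac{(\cN^T\mu)(e)}{\sigma(e)},
\]
the minimum taken over probability vectors $\mu$ on $\Gamma$.

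Next I would prove the two optimal values are reciprocal by a scaling argument. Nontriviality of $\Gamma$ (each row of $\cN$ nonzero) implies $\cN^T\mu\neq 0$ for every probability vector $\mu$, so the continuous objective above is strictly positive on $\convt(\Gamma)$, giving $t^*:=\Mod_{\infty,\sigma^{-1}}(\hat\Gamma)>0$. For the inequality $\Mod_{1,\sigma}(\Gamma)\ge 1/t^*$: let $\mu^*$ be an optimal probability vector, so $\cN^T\mu^*\preceq t^*\sigma$; then $\lambda:=\mu^*/t^*\succeq 0$ is feasible for the dual LP with $\mathbf{1}^T\lambda=1/t^*$, hence $\Mod_{1,\sigma}(\Gamma)\ge 1/t^*$ (in particular $M:=\Mod_{1,\sigma}(\Gamma)>0$). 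For the reverse: from the optimal $\lambda^*$ with $\mathbf{1}^T\lambda^*=M>0$ and $\cN^T\lambda^*\preceq\sigma$, set $\mu:=\lambda^*/M$, a probability vector with $\cN^T\mu\preceq\sigma/M$, so $t^*\le 1/M$. Combining, $\Mod_{1,\sigma}(\Gamma)\,\Mod_{\infty,\sigma^{-1}}(\hat\Gamma)=M\,t^*=1$.

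The only delicate points are (i) the identity $\adm(\hat\Gamma)=\dom(\Gamma)$, which I would justify from the facts recalled just above the statement ($\hat{\hat\Gamma}=\ext(\dom(\Gamma))\modeq\Gamma$ and that the recessive closed convex set $\dom(\Gamma)$ equals the dominant of its extreme points), and (ii) the strict positivity of both moduli, which is exactly where nontriviality of $\Gamma$ is needed — without it the product degenerates to $0\cdot\infty$. Everything else is routine LP duality and rescaling. As an alternative route, one could instead pass to the limit $p\to 1^{+}$ (so $q\to\infty$) in Theorem~\ref{thm:mod_reciprocals}, using $\hat\sigma(e)^{1/q}=\sigma(e)^{-1/p}\to\sigma(e)^{-1}$ together with $\mathcal{E}_{p,\sigma}(\rho)^{1/p}\to\mathcal{E}_{1,\sigma}(\rho)$ and $\mathcal{E}_{q,\hat\sigma}(\eta)^{1/q}\to\max_{e}\eta(e)/\sigma(e)$ uniformly on compacta, and the stability of the modulus optimum under these perturbations; but this requires an interchange‑of‑limits argument that the LP proof sidesteps, so I expect the LP argument above to be the cleaner main obstacle to write carefully.
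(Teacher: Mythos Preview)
The paper does not supply its own proof of this statement; it is quoted verbatim as Theorem~5 of \cite{albin2019blocking} and used as a black box, so there is nothing in the paper to compare your argument against.

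That said, your LP-duality proof is correct and self-contained. Writing $\Mod_{1,\sigma}(\Gamma)$ as the linear program $\min\{\sigma^T\rho:\cN\rho\succeq\mathbf{1},\ \rho\succeq 0\}$, applying strong duality, identifying $\adm(\hat\Gamma)$ with $\dom(\Gamma)$ via the Fulkerson blocking relation $\hat{\hat\Gamma}=\ext(\dom(\Gamma))$ (both sets being closed recessive convex sets with the same extreme points), and then matching the dual LP against $\Mod_{\infty,\sigma^{-1}}(\hat\Gamma)$ through the rescaling $\mu=\lambda/(\mathbf{1}^T\lambda)$ all work exactly as you describe. The nontriviality hypothesis is used precisely where you flag it, to guarantee $t^*>0$ and $M>0$ so that the product is well defined. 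Your alternative limiting route via $p\to 1^+$ in Theorem~\ref{thm:mod_reciprocals} would also succeed but, as you note, requires a uniform-convergence/interchange argument that the direct LP proof avoids.
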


The relationship between dual families of objects has several useful implications. Most important in the present setting is the fact that upper bounds on the modulus of the dual family provide lower bounds for the original (primal) family. In particular, the following lemma shows that the family of stars can be used to provide lower bounds for the modulus of fractional edge covers.

\begin{lemma}\label{lem:Bhat_fec}
The families of stars and fractional edge covers on $G$ are dual in the sense that
\begin{equation*}
    \hat{\Gamma}_{\text{star}}\modeq\gam{fec}.
\end{equation*}
\end{lemma}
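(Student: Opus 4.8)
The plan is to recognize that the Fulkerson blocker of $\gam{star}$ is, essentially by unwinding definitions, the family of fractional edge covers itself, and then to pass to equivalence using the recessiveness of $\gam{fec}$ together with Lemma~\ref{lem:gamma-equiv-dom}.

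First I would compute $\adm(\gam{star})$ explicitly. Since each star $\delta(v)$ carries the natural usage matrix~\eqref{eq:natural-N}, its $\rho$-length is $\ell_\rho(\delta(v)) = \sum_{e\in\delta(v)}\rho(e) = \rho(\delta(v))$. Therefore
\[
\adm(\gam{star}) = \{\rho\in\bR^E_{\ge 0} : \rho(\delta(v))\ge 1 \text{ for all } v\in V\},
\]
which is precisely the set of fractional edge covers; that is, $\adm(\gam{star}) = \gam{fec}$ as subsets of $\bR^E_{\ge 0}$. Next I would note that $\gam{star}$ is a finite, nontrivial family (one object per vertex), so the Fulkerson machinery applies and $\dualgam{star} := \ext(\adm(\gam{star})) = \ext(\gam{fec})$; moreover $\adm(\gam{star})$ is a polyhedron cut out by the finitely many inequalities $\rho\succeq 0$ and $\rho(\delta(v))\ge 1$, so it genuinely has finitely many extreme points.

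Finally I would invoke that $\gam{fec}$ is convex, closed, and recessive, i.e. $\gam{fec} = \dom(\gam{fec})$, as already observed in Section~\ref{sec:mod_ec_fec}. Consequently $\gam{fec}$ is the dominant of its extreme points, $\dom(\ext(\gam{fec})) = \gam{fec}$, and applying Lemma~\ref{lem:gamma-equiv-dom} gives
\[
\dualgam{star} = \ext(\gam{fec}) \modeq \dom(\ext(\gam{fec})) = \dom(\gam{fec}) \modeq \gam{fec},
\]
which is the assertion of the lemma.

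The only genuine content is the identity $\adm(\gam{star}) = \gam{fec}$, which is a direct translation of the definition of a fractional edge cover into the language of admissible densities, plus the fact (already recorded) that $\gam{fec}$ is recessive so that it coincides with the dominant of its extreme points. I do not expect a real obstacle here; the lemma is mostly a bookkeeping statement that aligns the Fulkerson blocker of stars with $\gam{fec}$, and then routes through the dominant–equivalence lemma to convert set equality into modulus equivalence. One small point to state carefully is that the equivalence $\dualgam{star}\modeq\gam{fec}$ combined with Theorem~\ref{thm:fec-bfec-equiv} means the star modulus is, up to reciprocals via Theorem~\ref{thm:mod_reciprocals}, computable from the finite family $\bfec$, which is the practical payoff worth flagging at the end of the proof.
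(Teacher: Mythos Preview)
Your proposal is correct and follows essentially the same route as the paper: both arguments reduce to the identity $\adm(\gam{star})=\gam{fec}$ by unwinding definitions, and then invoke Lemma~\ref{lem:gamma-equiv-dom} (together with the recessiveness of $\gam{fec}$) to conclude $\dualgam{star}=\ext(\gam{fec})\modeq\gam{fec}$. Your version spells out the recessiveness step a bit more explicitly, but the substance is the same.
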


\begin{proof}
Consider a vector $\gamma\in\mathbb{R}^E_{\ge 0}$. Then, $\gamma\in\gam{fec}$ if and only if
\[
\sum_{e\in\delta(v)}\gamma(e) \ge 1\quad
\text{ for every $v$ in $V$},
\]
which is equivalent to saying that $\gamma\in\adm(\gam{star})$. By Lemma~\ref{lem:gamma-equiv-dom}, then, 
\begin{equation*}
\hat{\Gamma}_\text{star} = \ext(\adm(\gam{star}))
\modeq \adm(\gam{star}) = \gam{fec}.
\end{equation*}
\end{proof}

\section{Star modulus}\label{sec:star_mod}

Lemma~\ref{lem:Bhat_fec} implies that Theorems~\ref{thm:mod_reciprocals} and~\ref{thm:mod_reciprocals-1-inf} hold with $\Gamma=\gam{star}$ and $\hat{\Gamma}=\gam{fec}$. This means that the modulus and extremal densities for fractional edge covers can be understood through the modulus of stars.

Calculating the star modulus turns out to be computationally simpler than calculating the modulus of fractional edge covers based on the number of constraints in the minimization problem. Specifically, the number of stars in a graph is equal to the number of vertices $|V|$, whereas the family of basic fractional edge covers in a graph is at least as big as the family of minimal edge covers. In this section, we prove simple results for the modulus of stars, as well as studying examples of well-known graphs. 

The following lemma states a basic estimate for the star modulus by restating a result from \cite{albin2017modulus}, along with a lower bound.

\begin{lemma}\label{lem:simple-bound-mod-star}
    Let $G = (V,E,\sigma)$ be a finite graph and let $\Gamma$ be the family of all stars in $G$. Let $\delta(G) := \min_{v \in V} \deg(v)$, then 
    \[
    \frac{\sigma_{\text{min}}}{\delta(G)^p} \le \Mod_{p,\sigma}(\Gamma) \le \frac{\sigma(E)}{\delta(G)^p}
    \]
    where $\sigma_{\text{min}} = \min\limits_{e \in E} \sigma(e)$ and $\sigma(E) = \sum\limits_{e \in E} \sigma(e)$.
\end{lemma}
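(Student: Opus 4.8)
The plan is to establish the two inequalities separately, using an explicit admissible density for the upper bound and a pigeonhole-type argument for the lower bound. For the upper bound, the natural candidate is the constant density $\rho_0 \equiv 1/\delta(G)$. Since every star $\delta(v)$ satisfies $|\delta(v)| = \deg(v) \ge \delta(G)$, the $\rho_0$-length of any star is $\ell_{\rho_0}(\delta(v)) = \deg(v)/\delta(G) \ge 1$, so $\rho_0 \in \adm(\Gamma)$. Plugging into the definition of $p$-energy gives $\Mod_{p,\sigma}(\Gamma) \le \cE_{p,\sigma}(\rho_0) = \sum_{e\in E}\sigma(e)\delta(G)^{-p} = \sigma(E)/\delta(G)^p$, which is the claimed upper bound. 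This half is essentially the restatement of the cited result from~\cite{albin2017modulus}.

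For the lower bound, fix any admissible density $\rho \in \adm(\Gamma)$ and let $v_0 \in V$ be a vertex attaining the minimum degree, so $\deg(v_0) = \delta(G)$. Admissibility forces $\ell_\rho(\delta(v_0)) = \sum_{e\in\delta(v_0)}\rho(e) \ge 1$. Since this sum has $\delta(G)$ terms, at least one edge $e_0 \in \delta(v_0)$ must satisfy $\rho(e_0) \ge 1/\delta(G)$. Then, for $1 \le p < \infty$,
\begin{equation*}
    \cE_{p,\sigma}(\rho) = \sum_{e\in E}\sigma(e)\rho(e)^p \ge \sigma(e_0)\rho(e_0)^p \ge \sigma_{\text{min}}\,\delta(G)^{-p},
\end{equation*}
and for $p=\infty$, $\cE_{\infty,\sigma}(\rho) = \max_{e}\sigma(e)\rho(e) \ge \sigma(e_0)\rho(e_0) \ge \sigma_{\text{min}}\,\delta(G)^{-1}$. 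Taking the infimum over $\rho \in \adm(\Gamma)$ yields $\Mod_{p,\sigma}(\Gamma) \ge \sigma_{\text{min}}/\delta(G)^p$.

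I do not anticipate a serious obstacle here; both bounds are elementary. The only point requiring a small amount of care is the pigeonhole step in the lower bound: one must note that the bound $\rho(e_0) \ge 1/\delta(G)$ holds for \emph{some} edge incident to the minimum-degree vertex (not for all edges, and not necessarily for an edge at any other vertex), and that this single edge already contributes enough energy because all weights $\sigma(e)$ are bounded below by $\sigma_{\text{min}} > 0$. Handling the $p=\infty$ case in parallel is a trivial modification since the max-energy dominates any single term just as the sum does.
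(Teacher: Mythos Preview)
Your proposal is correct and follows essentially the same approach as the paper: the constant density $\rho_0\equiv 1/\delta(G)$ for the upper bound, and for the lower bound the observation that admissibility at a minimum-degree vertex forces some incident edge to have $\rho(e_0)\ge 1/\delta(G)$, whose single energy term already gives $\sigma_{\min}/\delta(G)^p$. The only difference is cosmetic---the paper bounds the sum by $\max_e\rho(e)\cdot\delta(G)$ rather than invoking pigeonhole directly---and your explicit treatment of $p=\infty$ is a harmless addition.
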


\begin{proof}
    Define $\rho_0 \equiv \frac{1}{\delta(G)}$, then $\rho_0$ is admissible since for every $v \in V$
    \[
    \ell_{\rho_0}(\delta(v)) = \sum_{e \in \delta(v)} \rho_0(e) = \frac{\deg(v)}{\delta(G)} \ge 1,
    \]
    where the last inequality is true since every star will have at least $\delta(G)$ edges in it. So,
    \[
    \Mod_{p,\sigma}(\Gamma) \le \cE_{p,\sigma}(\rho_0) = \sum_{e \in E} \sigma(e) \rho_0(e)^p = \frac{\sigma(E)}{\delta(G)^p}.
    \]
    Let $v_0 \in V$ be the node with smallest degree ($\deg(v_0) = \delta(G)$) and let $\rho \in \adm(\Gamma)$. Then, 
    \[ 1 \le \ell_{\rho}(\delta(v_0)) = \sum_{e \in \delta(v_0)} \rho(e) \le \max_{e\in E} \rho(e) \sum_{e \in \delta(v_0)} 1 = \max_{e \in E} \rho(e) \delta(G)  \]
    This implies that there exists an edge $e_0$ such that $\rho(e_0) \ge \frac{1}{\delta(G)}$ and 
    \[
    \cE_{p,\sigma} (\rho) = \sum_{e \in E} \sigma(e) \rho(e)^p \ge \sigma(e_0)\rho(e_0)^p = \frac{\sigma(e_0)}{\delta(G)^p} \ge \frac{\sigma_{\text{min}}}{\delta(G)^p}.
    \]
    Taking the infimum over all $\rho \in \adm(\Gamma)$ we get the result 
    \[
    \Mod_{p,\sigma} (\Gamma) \ge \frac{\sigma_{\text{min}}}{\delta(G)^p}.
    \]
\end{proof}

\begin{example}[Star Graph]\label{ex:star-star}
Let $G = S_n$ be the unweighted star graph. The density $\rho_0 \equiv 1$ is admissible for $\gam{star}$. So,
\[ 
\Mod_2(\gam{star}) \le \cE_2(\rho_0) = \sum_{e \in E} \rho_0(e)^2 = (n-1) \cdot  1^2 = n-1.
\]
Duality and Example~\ref{ex:star-fec} imply that $\Mod_2(\gam{star})=\Mod_2(\gam{fec})^{-1}=n-1$, showing that this choice of $\rho$ is extremal.
\end{example}

\begin{lemma}\label{lem:d-reg-mod}
    Let $G$ be an unweighted $d$-regular graph and let $\Gamma$ be the family of all stars in $G$. Then, for $1<p<\infty$, the density $\rho \equiv \frac{1}{d}$ is extremal and $\Mod_p(\Gamma) = \frac{|E|}{d^p}$.
\end{lemma}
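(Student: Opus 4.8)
The plan is to prove the stated value by establishing matching upper and lower bounds, each equal to $|E|d^{-p}$, and then to observe that $\rho\equiv\frac1d$ attains the infimum and hence is extremal (uniquely so, since $1<p<\infty$).

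First I would dispatch the upper bound. Because $G$ is $d$-regular, each star $\delta(v)$ consists of exactly $d$ edges, so the constant density $\rho_0\equiv\frac1d$ satisfies $\ell_{\rho_0}(\delta(v))=d\cdot\frac1d=1$ for every $v\in V$; thus $\rho_0\in\adm(\Gamma)$ and
\[
\Mod_p(\Gamma)\le\cE_p(\rho_0)=\sum_{e\in E}\left(\tfrac1d\right)^p=\frac{|E|}{d^p}.
\]
This also follows directly from the upper half of Lemma~\ref{lem:simple-bound-mod-star}, since here $\delta(G)=d$ and $\sigma(E)=|E|$.

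For the lower bound, let $\rho\in\adm(\Gamma)$ be arbitrary. Summing the admissibility inequalities $\ell_\rho(\delta(v))\ge 1$ over all $v\in V$, and using that every edge is incident to exactly two vertices, gives
\[
2\sum_{e\in E}\rho(e)=\sum_{v\in V}\ell_\rho(\delta(v))\ge|V|.
\]
Since $d|V|=2|E|$ for a $d$-regular graph, this rearranges to $\sum_{e\in E}\rho(e)\ge|E|/d$. Applying Jensen's inequality to the convex map $t\mapsto t^p$ (valid for $p\ge1$) then yields
\[
\cE_p(\rho)=\sum_{e\in E}\rho(e)^p\ge|E|\left(\frac1{|E|}\sum_{e\in E}\rho(e)\right)^p\ge|E|\left(\frac1d\right)^p=\frac{|E|}{d^p}.
\]
Taking the infimum over $\adm(\Gamma)$ gives $\Mod_p(\Gamma)\ge|E|d^{-p}$, so equality holds throughout and $\rho_0$ is extremal; for $1<p<\infty$ it is the unique extremal density by \cite[Theorem~4.1]{albin2017modulus}. (Alternatively, equality in the Jensen step forces $\rho$ to be constant, and tightness of the handshaking bound then pins the constant to $1/d$.)

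I do not expect a genuine obstacle here; the only delicate point is the elementary bookkeeping that converts the handshaking identity $d|V|=2|E|$ into the bound $\sum_e\rho(e)\ge|E|/d$ in exactly the form Jensen needs. As a consistency check, the lower bound can instead be routed through duality: Lemma~\ref{lem:Bhat_fec} identifies the Fulkerson blocker of $\gam{star}$ with $\gam{fec}$, so Theorem~\ref{thm:mod_reciprocals} gives $\Mod_p(\Gamma)=\Mod_q(\gam{fec})^{-p/q}$, and the same handshaking computation shows that the constant density $d/|E|$ is admissible for $\gam{fec}$; hence $\Mod_q(\gam{fec})\le d^q|E|^{1-q}$, and substituting (using $(q-1)/q=1/p$) again yields $\Mod_p(\Gamma)\ge|E|d^{-p}$.
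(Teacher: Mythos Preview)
Your proof is correct. The upper bound matches the paper's exactly, but your primary lower-bound argument is genuinely different from the paper's. The paper postpones the proof until the probabilistic interpretation is available and then invokes Lemma~\ref{lem:simple-bound-prob}, which feeds the uniform pmf on $\gam{star}$ into the dual formulation of Theorem~\ref{thm:prob_int}. Your argument stays entirely on the primal side: you sum the admissibility constraints over all vertices (handshaking), then apply Jensen directly to $\cE_p(\rho)$. This is more elementary---it needs no duality machinery at all---and in fact works verbatim for $p=1$ as well. The paper's route, by contrast, is there precisely to showcase the probabilistic interpretation as a tool for producing lower bounds. Your alternative ``consistency check'' via Lemma~\ref{lem:Bhat_fec} and Theorem~\ref{thm:mod_reciprocals}, using the uniform convex combination of stars as an admissible density for $\gam{fec}$, is essentially the paper's argument rephrased: the uniform pmf on stars in Theorem~\ref{thm:prob_int} and the uniform convex combination of stars in $\adm(\gam{fec})$ are the same object viewed through the two dual lenses.
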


The proof of Lemma~\ref{lem:d-reg-mod} is given in Section~\ref{sec:prob-interp} once the probabilistic interpretation of modulus has been developed.

\begin{example}[Cycle Graphs]\label{ex:cycle-star}
Let $G = C_n$ be the unweighted cycle graph. Since $G$ is a $2$-regular graph, by Lemma \ref{lem:d-reg-mod}, $\rho \equiv \frac{1}{2}$ is extremal and 
\[ 
\Mod_2(\gam{star}) = \frac{n}{2^2} = \frac{n}{4}.
\]
\end{example}

\begin{example}[Complete Graph]\label{ex:complete-star}
Let $G = K_n$ be the unweighted complete graph. Since $G$ is a $(n-1)$-regular graph, by Lemma \ref{lem:d-reg-mod}, $\rho \equiv \frac{1}{n-1}$ is extremal and 
\[ 
\Mod_2(\gam{star}) = \frac{\frac{n(n-1)}{2}}{(n-1)^2} = \frac{n}{2(n-1)}.
\]
\end{example}

By duality, Examples~\ref{ex:cycle-star} and~\ref{ex:complete-star} complete Examples~\ref{ex:cycle-fec} and~\ref{ex:complete-fec}.

%%%%%%%%%%%%%%%%%%%%%%
\subsection{Probabilistic Interpretation}\label{sec:prob-interp}

The optimization problem in \eqref{eq:optimization_prob} is a convex problem for which, for $1 < p < \infty$, the minimizer is unique. A Lagrangian dual problem was developed in~\cite{albin2015modulus} and endowed with a probabilistic interpretation in~\cite{albin2016prob}. We review some of the concepts of this probabilistic interpretation and apply it to the families of stars and edge covers. 

Let $\mathcal{P}(\Gamma)\subset\mathbb{R}^{\Gamma}_{\ge 0}$ represent the set of probability mass functions (pmfs) on the set $\Gamma$. That is, $\mu \in \mathcal{P}(\Gamma)$ if and only if $\mu$ is a nonnegative vector with $\mu^T \mathbf{1} = 1$. 
For a given $\mu$, we can define a random variable $\rv{\gamma}$ with distribution given by $\mu: \mathbb{P}_{\mu} (\rv{\gamma} = \gamma) = \mu(\gamma)$. For an edge $e \in E$, the value $\cN(\rv{\gamma}, e)$ is a random variable as well and we denote its expectation as $\mathbb{E}_{\mu}[\cN(\rv{\gamma}, e)]$. The probabilistic interpretation is given by the following theorem. 

\begin{theorem}[Theorem 2 in \cite{albin2019blocking}]\label{thm:prob_int}
    Let $G=(V,E)$ be a finite graph with edge weights $\sigma$, and let $\Gamma$ be a non-trivial finite family of objects on $G$ with usage matrix $\cN$. Then, for any $1<p<\infty$, letting $q := p/(p-1)$ be the conjugate exponent to $p$, we have
    \begin{equation}\label{eq:mod_prob}
        \Mod_{p,\sigma}(\Gamma)^{-\frac{1}{p}} = \left( \min_{\mu \in \mathcal{P}(\Gamma)} \sum_{e\in E} \sigma(e)^{-\frac{q}{p}}  \mathbb{E}_{\mu}[\cN(\rv{\gamma}, e)]^q  \right)^{\frac{1}{q}}. 
    \end{equation} 

    Moreover, $\mu \in \mathcal{P}(\Gamma)$ is optimal for the right-hand side of \eqref{eq:mod_prob} if and only if
    \begin{equation}
        \mathbb{E}_{\mu}[\cN(\rv{\gamma}, e)] = \frac{\sigma(e) \rho^*(e)^{\frac{p}{q}}}{\Mod_{p,\sigma}(\Gamma)} \qquad \forall e \in E. 
    \end{equation}
    where $\rho^*$ is the unique extremal density for $\Mod_{p,\sigma}(\Gamma)$.
\end{theorem}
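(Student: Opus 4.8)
The plan is to derive~\eqref{eq:mod_prob} as the Lagrangian dual of the convex program~\eqref{eq:optimization_prob}. Since $\Gamma$ is finite and nontrivial, that problem is feasible (take $\rho=c\mathbf{1}$ with $c$ large; $\cN$ is nonnegative with no zero row, so $\cN\rho\succ\mathbf{1}$), so strong duality holds for this problem with affine constraints. First I would attach a multiplier $\lambda\in\bR^{\Gamma}_{\ge 0}$ to the constraints $\cN\rho\succeq\mathbf{1}$, keeping $\rho\succeq 0$ as a simple constraint, and collect terms to get
\[
L(\rho,\lambda) = \sum_{\gamma\in\Gamma}\lambda_\gamma + \sum_{e\in E}\bigl(\sigma(e)\rho(e)^p - w_\lambda(e)\rho(e)\bigr), \qquad w_\lambda(e):=\sum_{\gamma\in\Gamma}\lambda_\gamma\cN(\gamma,e)\ge 0.
\]

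Next I would compute the dual function $g(\lambda)=\inf_{\rho\succeq 0}L(\rho,\lambda)$ edgewise: for each $e$, minimize $t\mapsto\sigma(e)t^p-w_\lambda(e)t$ over $t\ge 0$. The minimizer is $t^*(e)=\bigl(w_\lambda(e)/(p\sigma(e))\bigr)^{1/(p-1)}$ (equal to $0$ when $w_\lambda(e)=0$; the case $w_\lambda(e)<0$ cannot occur), and substituting gives $g(\lambda)=\sum_\gamma\lambda_\gamma - c_p\sum_{e}\sigma(e)^{-q/p}w_\lambda(e)^{q}$ for an explicit constant $c_p>0$ depending only on $p$; here one uses $q=p/(p-1)$ and $q/p=1/(p-1)$ to recognize the exponents. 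Strong duality gives $\Mod_{p,\sigma}(\Gamma)=\max_{\lambda\succeq 0}g(\lambda)$, and the primal optimum is $\rho^*(e)=t^*(e)$ evaluated at the optimal $\lambda^*$.

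The last step turns the maximization over $\lambda\succeq 0$ into a minimization over pmfs. Writing $\nu:=\mathbf{1}^{T}\lambda$ and $\mu:=\lambda/\nu\in\mathcal{P}(\Gamma)$ (the case $\nu=0$ gives $g(0)=0$, which is not optimal since $\Mod_{p,\sigma}(\Gamma)>0$ for a nontrivial family), one has $w_\lambda(e)=\nu\,\mathbb{E}_\mu[\cN(\rv\gamma,e)]$, so $g(\nu\mu)=\nu-c_p\nu^{q}A(\mu)$ with $A(\mu):=\sum_e\sigma(e)^{-q/p}\mathbb{E}_\mu[\cN(\rv\gamma,e)]^{q}$. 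Maximizing $\nu\mapsto\nu-c_p A(\mu)\nu^{q}$ over $\nu\ge 0$ is a one-variable calculus exercise (using $q>1$); after simplifying the constants with $1/p+1/q=1$ and $1/(q-1)=p-1$, the value is $A(\mu)^{1-p}$ and the maximizer is $\nu^*=p\,A(\mu)^{1-p}$. Hence $\Mod_{p,\sigma}(\Gamma)=\max_{\mu\in\mathcal{P}(\Gamma)}A(\mu)^{1-p}=\bigl(\min_{\mu}A(\mu)\bigr)^{1-p}$ — using $1-p<0$, that $A(\mu)>0$ for every $\mu$ (by nontriviality the expected-usage vector is nonzero), and that the minimum is attained because $\mathcal{P}(\Gamma)$ is compact and $A$ continuous — and raising to the power $1/q$ gives exactly~\eqref{eq:mod_prob}. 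For the optimality characterization, at the optimum $\nu^*=p\,A(\mu^*)^{1-p}=p\,\Mod_{p,\sigma}(\Gamma)$, so combining $w_{\lambda^*}(e)=p\sigma(e)\rho^*(e)^{p-1}$ (from $\rho^*(e)=t^*(e)$) with $w_{\lambda^*}(e)=\nu^*\,\mathbb{E}_{\mu^*}[\cN(\rv\gamma,e)]$ and $p-1=p/q$ yields $\mathbb{E}_{\mu^*}[\cN(\rv\gamma,e)]=\sigma(e)\rho^*(e)^{p/q}/\Mod_{p,\sigma}(\Gamma)$; the ``only if'' direction is KKT, and ``if'' follows because this first-order system has a unique solution (the energy is strictly convex for $1<p<\infty$).

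\emph{Main obstacle.} The structure is routine convex duality, so I expect no genuine difficulty; the real work is bookkeeping the Hölder-conjugate exponents so the constants $c_p$ collapse correctly, and disposing of the degenerate cases ($\nu=0$; positivity of $A(\mu)$; attainment of the minimum over $\mathcal{P}(\Gamma)$). A secondary point worth stating carefully is the justification of strong duality (cleanest via the affine-constraint form of Slater's theorem) together with attainment of the dual optimum, which is what licenses writing ``$\min$'' rather than ``$\inf$'' in~\eqref{eq:mod_prob}.
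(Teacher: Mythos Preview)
The paper does not prove this theorem; it is quoted verbatim as Theorem~2 of~\cite{albin2019blocking} and used as a black box. So there is no ``paper's own proof'' to compare against.

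That said, your Lagrangian-duality derivation is correct and is exactly the standard route (and, as far as one can tell from the citation trail through~\cite{albin2015modulus} and~\cite{albin2016prob}, the route taken in the original references). The edgewise minimization, the reparametrization $\lambda=\nu\mu$, and the one-variable optimization in $\nu$ all check out; in particular your claim that the inner maximum over $\nu$ collapses to $A(\mu)^{1-p}$ is right (the constants work because $(c_pq)^{-(p-1)}=p$). One small comment on the ``if'' direction of the optimality characterization: your phrasing via uniqueness of a first-order system is a bit oblique. A cleaner argument is direct substitution: if $\mathbb{E}_\mu[\cN(\rv\gamma,e)]=\sigma(e)\rho^*(e)^{p/q}/\Mod_{p,\sigma}(\Gamma)$ for all $e$, then
\[
A(\mu)=\sum_{e}\sigma(e)^{-q/p}\left(\frac{\sigma(e)\rho^*(e)^{p/q}}{\Mod_{p,\sigma}(\Gamma)}\right)^{q}
=\Mod_{p,\sigma}(\Gamma)^{-q}\sum_{e}\sigma(e)\rho^*(e)^{p}
=\Mod_{p,\sigma}(\Gamma)^{1-q},
\]
which equals the minimum value of $A$, so $\mu$ is optimal. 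This avoids any appeal to KKT or strict convexity for that direction.
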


As stated in \cite{albin2019blocking}, the probabilistic interpretation is particularly informative when $p=2$, $\sigma \equiv 1$, and $\Gamma$ is a collection of subsets of edges, so that the usage matrix $\cN$ is as defined in \eqref{eq:natural-N}. In this case, the duality relation from Theorem \ref{thm:prob_int} can be written as 
\begin{equation}\label{eq:prob-interp-2-norm}
\Mod_2(\Gamma)^{-1} = \min_{\mu \in \mathcal{P}(\Gamma)} \mathbb{E}_{\mu} | \rv{\gamma} \cap \rv{\gamma'}|
\end{equation}
where $\rv{\gamma}$ and $\rv{\gamma}'$ are two independent random variables chosen according to the pmf $\mu$, and $| \rv{\gamma} \cap \rv{\gamma'}|$ is their \textit{overlap}, which is also a random variable. This implies that computing the 2-modulus is equivalent to finding a pmf $\mu$ that minimizes the expected overlap of two independent, identically distributed random objects. 

The expectations that appear in this section have special forms in the case of $\gam{star}$. In particular, if $e=\{x,y\}\in E$, then
\begin{equation}\label{eq:expect-star}
    \mathbb{E}_\mu[\mathcal{N}(\underline{\gamma},e)] = \mu(\delta(x)) + \mu(\delta(y)).
\end{equation}
Moreover, the expected overlap can be written as 
\begin{equation}\label{eq:exp-overlap}
\mathbb{E}_{\mu} | \rv{\gamma} \cap \rv{\gamma'}| = \sum_{v,v' \in V} | \delta(v) \cap \delta(v')| \mu(\delta(v)) \mu(\delta(v')) = \sum_{v \in V} \deg(v) \mu(\delta(v))^2 + \sum_{v \in V} \sum_{v' \sim v} \mu(\delta(v)) \mu(\delta(v')),
\end{equation}
which expresses the expected overlap as the sum of two terms. In the second term, the notation $v'\sim v$ in the inner summation indicates that the sum is taken over all neighbors $v'$ of $v$. Since the expected overlap is being minimized, the first term suggests that stars with higher degrees, i.e.\ stars with more edges, will be assigned smaller $\mu$ values than stars with a smaller number of edges. The second term acts to minimize the probabilities of neighboring vertices.

In the case of $\gam{star}$, selecting the uniform pmf yields a simple lower bound on the modulus.

\begin{lemma}\label{lem:simple-bound-prob}
    Let $\Gamma=\gam{star}$ and $1<p<\infty$. Then,
    \begin{equation*}
        \Mod_{p,\sigma}(\Gamma) \ge \left(\frac{|V|}{2}\right)^p
        \left(\sum_{e\in E}\sigma(e)^{-\frac{q}{p}}\right)^{-\frac{p}{q}}.
    \end{equation*}
    In particular, if $\sigma\equiv 1$, then
    \begin{equation}\label{eq:prob-lower-bound-unweighted}
        \Mod_p(\Gamma) \ge \left(\frac{|V|}{2|E|^{\frac{1}{q}}}\right)^p.
    \end{equation}
\end{lemma}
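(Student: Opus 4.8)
The plan is to use the probabilistic characterization of modulus from Theorem~\ref{thm:prob_int} with the uniform pmf on the family of stars. Since $\gam{star}$ has exactly $|V|$ elements, the uniform pmf is $\mu(\delta(v)) = 1/|V|$ for every $v \in V$. By Theorem~\ref{thm:prob_int}, for $1<p<\infty$ and $q = p/(p-1)$,
\[
\Mod_{p,\sigma}(\Gamma)^{-\frac{1}{p}} = \left(\min_{\mu\in\mathcal{P}(\Gamma)}\sum_{e\in E}\sigma(e)^{-\frac{q}{p}}\mathbb{E}_\mu[\mathcal{N}(\rv{\gamma},e)]^q\right)^{\frac{1}{q}} \le \left(\sum_{e\in E}\sigma(e)^{-\frac{q}{p}}\mathbb{E}_{\mu_{\text{unif}}}[\mathcal{N}(\rv{\gamma},e)]^q\right)^{\frac{1}{q}},
\]
so it suffices to estimate the right-hand side with the uniform pmf.

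The key computation uses~\eqref{eq:expect-star}: for an edge $e = \{x,y\}$, $\mathbb{E}_{\mu_{\text{unif}}}[\mathcal{N}(\rv{\gamma},e)] = \mu(\delta(x)) + \mu(\delta(y)) = 2/|V|$. Substituting this into the bound gives
\[
\Mod_{p,\sigma}(\Gamma)^{-\frac{1}{p}} \le \left(\sum_{e\in E}\sigma(e)^{-\frac{q}{p}}\left(\frac{2}{|V|}\right)^q\right)^{\frac{1}{q}} = \frac{2}{|V|}\left(\sum_{e\in E}\sigma(e)^{-\frac{q}{p}}\right)^{\frac{1}{q}}.
\]
Raising both sides to the power $-p$ (which reverses the inequality since $-p<0$) yields
\[
\Mod_{p,\sigma}(\Gamma) \ge \left(\frac{|V|}{2}\right)^p\left(\sum_{e\in E}\sigma(e)^{-\frac{q}{p}}\right)^{-\frac{p}{q}},
\]
which is the first claimed bound. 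For the unweighted case $\sigma\equiv 1$, each term $\sigma(e)^{-q/p} = 1$, so $\sum_{e\in E}\sigma(e)^{-q/p} = |E|$, and the bound becomes $\Mod_p(\Gamma) \ge (|V|/2)^p |E|^{-p/q} = (|V|/(2|E|^{1/q}))^p$, as stated in~\eqref{eq:prob-lower-bound-unweighted}.

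This argument is essentially a direct substitution, so there is no serious obstacle; the only point requiring a little care is tracking the direction of the inequality when raising to the negative power $-p$, and confirming that the exponent arithmetic ($\frac{1}{q}\cdot(-p) = -\frac{p}{q}$) is carried out correctly. One should also note that Theorem~\ref{thm:prob_int} requires $\Gamma$ to be nontrivial and finite, both of which hold for $\gam{star}$ on a finite graph (each star contains at least one edge since we may assume $G$ has no isolated vertices, and $|\gam{star}| = |V| < \infty$).
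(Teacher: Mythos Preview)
Your proof is correct and follows essentially the same approach as the paper: choose the uniform pmf on $\gam{star}$, compute the expected edge usage via~\eqref{eq:expect-star} as $2/|V|$, substitute into the probabilistic characterization~\eqref{eq:mod_prob}, and invert. Your write-up is, if anything, slightly more careful than the paper's in tracking the direction of the inequality under the power $-p$ and in noting the hypotheses (finiteness and nontriviality of $\gam{star}$) needed to invoke Theorem~\ref{thm:prob_int}.
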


\begin{proof}
Define $\mu$ to be the uniform pmf on $\Gamma$. That is, $\mu(\delta(v)) = \frac{1}{|V|}$ for every $v \in V$. Then, the expected edge usage for every edge $e$ is 
\[
\mathbb{E}_{\mu}[\cN(\rv{\gamma}, e)] = \frac{2}{|V|}.
\]
Substituting this into equation~\eqref{eq:mod_prob} we get
\begin{align*}
    \Mod_{p,\sigma}(\Gamma)^{-\frac{1}{p}} & \le \left(  \sum_{e\in E} \sigma(e)^{-\frac{q}{p}}  \left( \frac{2}{|V|}\right)^q  \right)^{\frac{1}{q}} \\
    & \le \frac{2}{|V|} \left(  \sum_{e\in E} \sigma(e)^{-\frac{q}{p}}  \right)^{\frac{1}{q}} \\
    \Mod_{p,\sigma}(\Gamma) & \ge \left( \frac{2}{|V|} \right)^{-p}  \left(\sum_{e\in E}\sigma(e)^{-\frac{q}{p}}\right)^{-\frac{p}{q}}.
\end{align*}
which gives an upper bound to the modulus. Letting $\sigma \equiv 1$ yields the result from equation~\eqref{eq:prob-lower-bound-unweighted}.
\end{proof}

This allows us to prove Lemma~\ref{lem:d-reg-mod}.

\begin{proof}[Proof of Lemma~\ref{lem:d-reg-mod}]
    Lemma~\ref{lem:simple-bound-mod-star} provides an upper bound. The lower bound follows from Lemma~\ref{lem:simple-bound-prob}. Indeed, since $G$ is assumed to be $d$-regular, $|V| = 2|E|/d$. Substituting into~\eqref{eq:prob-lower-bound-unweighted} yields the bound
    \begin{equation*}
        \Mod_p(\Gamma) \ge \left(\frac{|E|^{1-\frac{1}{q}}}{d}\right)^p
        = \frac{|E|}{d^p}.
    \end{equation*}
\end{proof}

The following examples show optimal pmf's for several types of graphs. These examples highlight the balance of terms in~\eqref{eq:exp-overlap}.

\begin{example}\label{ex:star-prob}
Let $G = S_n$ be the star graph and define $\mu = 0$ on the center star and $\mu = \frac{1}{n-1}$ on the remaining $n-1$ stars. Then $\mu \in \mathcal{P}(\gam{star})$. Moreover, for any edge $e \in E$, the expected edge usage is 
\[ 
\mathbb{E}_{\mu}[\cN(\rv{\gamma}, e)] = \frac{1}{n-1},
\]
since each edge sees exactly one of the stars of degree one.

Using~\eqref{eq:mod_prob}, we obtain
\[
\Mod_2(\gam{star})^{-1} \le \sum_{e\in E}\left(\frac{1}{n-1}\right)^2 = (n-1)\left( \frac{1}{n-1}\right)^2 = \frac{1}{n-1}.
\]
From Example~\ref{ex:star-star} we know that $\Mod_2(\gam{star}) = n-1$, so this pmf is optimal.
\end{example}

\begin{example}\label{ex:cycle-prob}
Let $G = C_n$ be the cycle graph. By choosing the uniform pmf as in the proof of Lemma~\ref{lem:simple-bound-prob}, we obtain the lower bound
\begin{equation*}
    \Mod_2(\gam{star}) \ge \frac{n}{4}.
\end{equation*}
From Example~\ref{ex:cycle-star} we know that $\Mod_2(\gam{star}) = \frac{n}{4}$, so this pmf is optimal.
\end{example}

\begin{example}\label{ex:complete-prob}
Let $G = K_n$ be the complete graph. As in the previous example, the uniform pmf yields a lower bound,
\begin{equation*}
    \Mod_2(\gam{star}) \ge \frac{n}{2(n-1)},
\end{equation*}
which coincides with the modulus as computed in Example~\ref{ex:complete-star}, showing that the uniform pmf is optimal.
    
\end{example}

%%%%%%%%%%%%%%%%%
\subsection{More Examples}
In the following examples we compute $\Mod_2(\gam{star})$ on several unweighted, undirected graphs.

\begin{example}[Path Graphs]\label{ex:path-star}
     Let $G = P_n$ be the unweighted path graph with $|V| = n\ge 3$ and $|E| = n-1$.
     \begin{itemize}
        \item[a)] For $n=3$, the density $\rho_0 \equiv 1$ is admissible for $\gam{star}$. So,
        \[ 
        \Mod_2(\gam{star}) \le 2.
        \]
        For the lower bound, define $\mu(\delta(v)) = \frac{1}{2}$ on the two nodes with degree 1 and $\mu(\delta(u)) = 0$ on the node with degree 2. Using~\eqref{eq:mod_prob},
        \begin{align*}
            \Mod_2(\gam{star})^{-1} & \le 2 \left( \frac{1}{2} \right)^2 = \frac{1}{2}.
        \end{align*}
        Thus we have that $\Mod_2(\gam{star}) = 2$, $\rho_0$ is the extremal density, and $\mu$ is an optimal pmf.

        \item[b)] For $n=4$, let $\rho_0$ be defined as in Figure \ref{fig:P4_rho0}. One can check that this density is admissible for $\gam{star}$. So,
        \[ 
        \Mod_2(\gam{star}) \le 2.
        \]
        For the lower bound, similar to the $n=3$ case, define $\mu(\delta(v)) = \frac{1}{2}$ on the nodes with degree 1 and $\mu(\delta(u)) = 0$ for the nodes with degree 2. The expected edge usage is $1$ on the two outer edges and $0$ on the inner edge. By~\eqref{eq:mod_prob},
        \begin{align*}
            \Mod_2(\gam{star})^{-1} & \le 2 \left( \frac{1}{2} \right)^2 = \frac{1}{2}.
        \end{align*}
        Thus we have that $\Mod_2(\gam{star}) = 2$, $\rho_0$ is the extremal density, and $\mu$ is an optimal pmf.

        \item[c)] For $n \ge 5$ and $n$ odd, define $\rho_0$ as 
        \[ 
        \rho_0(e) = \begin{cases} 1 & \text{if $e$ is a pendant edge}, \\ \frac{1}{2} & \text{ otherwise}. \end{cases}
        \]
        This density is admissible for $\gam{star}$, so
         \[ 
         \Mod_2(\gam{star}) \le \mathcal{E}_2(\rho_0)= 2\cdot 1^2 + (n-3)\cdot \left(\frac{1}{2}\right)^2 = \frac{n+5}{4}.
         \]

        For the lower bound define $\mu = \frac{4}{n+5}$ on the stars with degree 1. On the stars with degree 2, assign $\mu$ to be 0 and $\frac{2}{n+5}$ alternately. (See Figure~\ref{fig:mu-path}.) One can verify that this is a pmf on the stars. Since no two stars with positive probability share an edge,~\eqref{eq:prob-interp-2-norm} and~\eqref{eq:exp-overlap} show that
        \begin{align*}
            \Mod_2(\gam{star})^{-1} & \le 2 \left( \frac{4}{n+5} \right)^2 + \frac{n-3}{2} \cdot 2 \left( \frac{2}{n+5} \right)^2 = \frac{4n+20}{(n+5)^2} = \frac{4}{n+5}.
        \end{align*}
        Thus we have that $\Mod_2(\gam{star}) = \frac{4}{n+5}$ and $\rho_0$ is the extremal density and $\mu$ is an optimal pmf.

         \item[d)] For $n \ge 6$ and even, define $\rho_0$ to be 1 on the pendant edges, and alternating between the values $\rho_1$ and $\rho_2$ on the remaining edges, where 
         \[
         \rho_1 = \frac{n-4}{2n-6} \qquad \rho_2 = \frac{n-2}{2n-6}.
         \]
         (See Figures~\ref{fig:P6_rho0} and~\ref{fig:P8_rho0}.) Note this density is admissible for $\gam{star}$. Thus,
         \[
         \Mod_2(\gam{star}) \le 2 \cdot 1^2 + \left(\frac{n}{2}-1\right) \left( \frac{n-4}{2n-6}\right)^2 + \left( \frac{n}{2}-2\right) \left( \frac{n-2}{2n-6} \right)^2 = \frac{n^2+2n-16}{2\left(2n-6\right)}.
         \]
         To obtain a lower bound, first enumerate the first half of the vertices $v_1, v_2, \ldots, v_k$, where $k = \frac{n}{2}$. For the stars centered at $v_i$, $1 \le i \le k$, define $\mu$ as follows:
         \[
         \mu(\delta(v_i)) = \begin{cases}
             \frac{2(2n-6)}{n^2+2n-16}, & i = 1 \\
             0, & i = 2 \\
             \frac{2n-2(i+1)}{n^2+2n-16}, & 2 < i \le k, i \text{ odd} \\
             \frac{2(i-2)}{n^2+2n-16}, & 2< i \le k, i \text{ even} \\
         \end{cases}
         \]
         Once the first $k$ stars have been assigned a $\mu$ value, assign the values of $\mu$ to the following $k$ vertices by mirroring the values: $\mu(\delta(v_{n+1-i}))=\mu(\delta(v_i))$ for $i=1,2,\ldots,k$. A straightforward (but long) calculation shows that the corresponding lower bound on modulus agrees with the upper bound above.
         \begin{figure}
             \centering
             \includegraphics[width=3.8in]{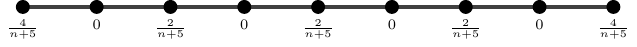}
             \caption{The values of $\mu$ for a path graph with $n \ge 5$ and odd.}
             \label{fig:mu-path}
         \end{figure}

    \end{itemize}

\begin{figure}
    \begin{subfigure}[h]{0.2\textwidth}
    \centering
    \includegraphics[width=0.8in]{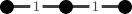}
    \caption{$P_3$}
    \label{fig:P3_rho0}
    \end{subfigure}
    \hfill
    \begin{subfigure}[h]{0.2\textwidth}
    \centering
    \includegraphics[width=1.2in]{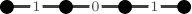}
    \caption{$P_4$}
    \label{fig:P4_rho0}
    \end{subfigure}
    \hfill
    \begin{subfigure}[h]{0.3\textwidth}
    \centering
    \includegraphics[width=1.5in]{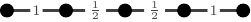}
    \caption{$P_5$}
    \label{fig:P5_rho0}
    \end{subfigure}
    \vfill
    \begin{subfigure}[h]{0.4\textwidth}
    \centering
    \includegraphics[width=2in]{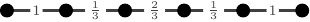}
    \caption{$P_6$}
    \label{fig:P6_rho0}
    \end{subfigure}
    \hfill
    \begin{subfigure}[h]{0.5\textwidth}
    \centering
   \includegraphics[width=2.4in]{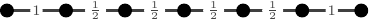}
    \caption{$P_7$}
    \label{fig:P7_rho0}
    \end{subfigure}
    \vfill
    \begin{subfigure}[h]{\textwidth}
    \centering
    \includegraphics[width=2.8in]{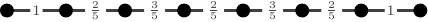}
    \caption{$P_8$}
    \label{fig:P8_rho0}
    \end{subfigure}
    \caption{The optimal density $\rho_0$ for path graphs when $p=2$.}
    \label{fig:path-extremal-density}
\end{figure}
\end{example}

\begin{example}[Wheel Graphs]\label{ex:wheel-star}
Let $G = W_n$ be the wheel graph with $|V| = n\ge 4$ and $|E| = 2(n-1)$. Specifically, $G$ has a special center node $v \in V$ with $\deg(v) = n-1$, and every other node $u \neq v$, has $\deg(u) = 3$.

\begin{itemize}
    \item[a)] For $4 \le n \le 5$, define $\rho_0$ as 
    \[
    \rho_0(e) = \begin{cases}
        \frac{1}{n-1}, & \text{if $e$ is incident to $v$} \\
        \frac{n-2}{2(n-1)}, & \text{otherwise}.
    \end{cases}
    \]
    See Figure~\ref{fig:wheel-extremal-density}. Note that $\rho_0$ is admissible for $\gam{star}$ and 
    \[
    \Mod_2(\gam{star}) \le (n-1)  \left(\frac{1}{n-1}\right)^2 + (n-1) \left(  \frac{n-2}{2(n-1)}\right)^2 = \frac{4+(n-2)^2}{4(n-1)}.
    \]
    To obtain a lower bound, define $\mu \in \mathcal{P}(\gam{star})$ as $\mu(\delta(v)) = \frac{6-n}{4+(n-2)^2}$ and define $\mu(\delta(u)) = \frac{n-2}{4+(n-2)^2}$ for $u \neq v$. Using \eqref{eq:expect-star} and Theorem~\ref{thm:prob_int},
    \begin{align*}
        \Mod_2(\gam{star})^{-1} & \le \left(\frac{6-n}{4+(n-2)^2}\right)^2 + 5(n-1) \left( \frac{n-2}{4+(n-2)^2} \right)^2 \\
        &+ 2 (n-1) \left(\frac{6-n}{4+(n-2)^2}\right) \left( \frac{n-2}{4+(n-2)^2} \right) \\
        & = \frac{4(n-1)(n^2-4n+8)}{(4+(n-2)^2)^2} = \frac{4(n-1)}{4+(n-2)^2} \\
    \end{align*}
    Thus we have that $\Mod_2(\gam{star}) = \frac{4+(n-2)^2}{4(n-1)}$, $\rho_0$ is the extremal density and $\mu$ is an optimal pmf. 

    \item[b)] For $n \ge 6$, define $\rho_0$ as 
    \[
    \rho_0(e) = \begin{cases}
        \frac{1}{5}, & \text{if $e$ is incident to $v$} \\
        \frac{2}{5}, & \text{otherwise}.
    \end{cases}
    \]
    Again, note that $\rho_0$ is admissible for $\gam{star}$ and 
    \[
    \Mod_2(\gam{star}) \le (n-1)  \left(\frac{1}{5}\right)^2 + (n-1) \left(  \frac{2}{5}\right)^2 = \frac{n-1}{5}.
    \]
    To obtain a lower bound, define $\mu(\delta(v)) = 0$ and $\mu(\delta(u)) = \frac{1}{n-1}$ for $u \neq v$. Using \eqref{eq:expect-star} and Theorem~\ref{thm:prob_int},
    \begin{align*}
        \Mod_2(\gam{star})^{-1} & \le 3(n-1)\left(\frac{1}{n-1}\right)^2 + 2(n-1)\left(\frac{1}{n-1}\right)^2 = \frac{5}{n-1}.
    \end{align*}
    Thus we have that $\Mod_2(\gam{star}) = \frac{n-1}{5}$, $\rho_0$ is the extremal density, and $\mu$ is an optimal pmf. 
\end{itemize}

\end{example}

\begin{figure}
        \begin{subfigure}{0.28\textwidth}
        \centering
        \includegraphics[width=1in]{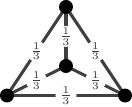}
        \caption{$W_4$}
        \label{fig:W4_rho0}
        \end{subfigure}
        \hfill 
        \begin{subfigure}{0.28\textwidth}
        \centering
        \includegraphics[width=1in]{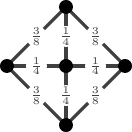}
        \caption{$W_5$}
        \label{fig:W5_rho0}
        \end{subfigure}
        \hfill 
        \begin{subfigure}{0.34\textwidth}
        \centering
        \includegraphics[width=1in]{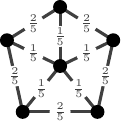}
        \caption{$W_6$.}
        \label{fig:W6_rho0}
        \end{subfigure}
        \caption{The extremal density $\rho_0$ for wheel graphs when $p=2$.}
        \label{fig:wheel-extremal-density}
    \end{figure}

\section{Comparing $\gam{ec}$ and $\gam{fec}$}\label{sec:num-examples}

For an unweighted graph, $G$, we can use the results from Lemma~\ref{lem:Bhat_fec} and Theorem~\ref{thm:mod_reciprocals} to restate the upper and lower bounds relating $\Mod_{2}(\gam{ec})$, $\Mod_2(\gam{fec})$ and $\Mod_{2}(\gam{star})$ as
\begin{equation}\label{eq:bounds_ec_star}
\left( \frac{3}{4} \right)^2 \leq 
\Mod_{2}(\gam{ec})\Mod_{2}(\gam{fec})^{-1} =
\Mod_{2}(\gam{ec})\Mod_{2}(\gam{star}) \leq 1,
\end{equation}
(If $G$ is bipartite, then $\gam{fec}\modeq\gam{ec}$, so the moduli of the two families are equal.) In this section, we present two examples that explore the tightness of the bounds in~\eqref{eq:bounds_ec_star} on nonbipartite graphs. We consider the complete graphs, $K_n$, for which we have explicit formulas for the moduli, and the $n$-barbell graphs, for which we use numerical approximation to compute the moduli.

\subsection{Numerical methods}\label{sec:num-methods}
One natural way to solve the modulus problem numerically is to apply a convex optimization solver to~\eqref{eq:optimization_prob}. For sufficiently small families, $\Gamma$, it is not difficult to form the full usage matrix $\mathcal{N}\in\mathbb{R}^{\Gamma\times E}$. For example, the usage matrix for $\gam{star}$ is a $V\times E$ matrix.

For larger families (e.g., $\gam{ec}$ and $\bfec$), it is often not feasible even to enumerate all constraints. A possible option for such cases is the \emph{basic algorithm} described in~\cite{albin2017modulus}. This algorithm proceeds by maintaining a relatively small set of active constraints, thus eliminating the need to fully construct $\mathcal{N}$. Instead, the algorithm iteratively improves an estimate of the optimal density. In each round of the iteration, the algorithm requires a subroutine \texttt{shortest($\rho$)}, that produces a $\rho$-shortest object in $\Gamma$, that is
\[
\gamma = \texttt{shortest}(\rho) \Longrightarrow \forall \gamma': \ell_{\rho}(\gamma) \le \ell_{\rho}(\gamma').
\]

As long as an efficient implementation of \texttt{shortest} exists for $\Gamma$, the basic algorithm can be used to numerically approximate modulus. For example, Dijkstra's algorithm can be used to compute the modulus of $st$-paths, and Kruskal's algorithm can be used to compute the modulus of spanning trees. This method can also be applied to the families in the present paper.

\paragraph{Stars.} For $\gam{star}$, it is possible to fully compute $\mathcal{N}$ (there is one row per vertex). There is also an efficient \texttt{shortest} method; one simply loops over all vertices to find the one with minimum $\rho$-degree.

\paragraph{Edge covers.}
For the family of edge covers, there is a minimum weight edge cover (MWEC) algorithm given by Schrijver in \cite{schrijver2003combinatorial}. This algorithm reduces the minimum weight edge cover problem to the minimum weight perfect matching (MWPM) problem, for which there are several polynomial-time algorithms~\cite{kuhn1955hungarian,edmonds1965paths,micali1980v}.

\paragraph{Fractional edge covers.}
The authors are not aware of an efficient method for computing the minimum weight fractional edge cover. (Considering the method described above for the MWEC problem, it is possible that an analogous reduction to the minimum weight fractional perfect matching problem exists.) Fortunately, it is possible to compute the modulus of this family anyway, using  Theorem~\ref{thm:mod_reciprocals} and Lemma~\ref{lem:Bhat_fec}. This allows us to repurpose the star modulus code to compute the modulus of fractional edge covers.

\paragraph{Implementation details.}
For the computations presented in this paper, we used the Python implementation of the basic algorithm found in~\cite{modbook}. The graphs are represented using NetworkX~\cite{SciPyProceedings_11}, NumPy~\cite{harris2020array}, and SciPy~\cite{2020SciPy-NMeth}, and the convex optimization problem is solved numerically using CVXPY~\cite{diamond2016cvxpy,agrawal2018rewriting}.  

When calculating the modulus of stars, it is efficient to compute the full usage matrix $\cN$ using the NetworkX built-in function \texttt{incidence\_matrix}. Since the family of edge covers tends to be large, it is not feasible to generate $\mathcal{N}$. Instead, we use the basic algorithm and implement the \texttt{shortest} subroutine with Schrijver's MWEC algorithm, using the NetworkX function \texttt{min\_weight\_matching} to solve the MWPM problem. It is also generally infeasible to generate the full usage matrix for fractional edge covers. For fractional edge covers, we calculate the modulus by using the results from Section~\ref{sec:fulkerson_duality} and the code that computes modulus of stars.

%%%%%%%%%%%%%%%%%%%%%%

\subsection{Complete graphs}
The modulus values for $K_n$ were found in Examples~\ref{ex:complete-ec},~\ref{ex:complete-fec}, and~\ref{ex:complete-star}. Summarizing these examples, we found that
\begin{equation*}
    \Mod_2(\gam{ec}) =
    \begin{cases}
        \frac{2(n-1)}{n}\quad\text{ if $n$ is even,}\\
        \frac{2n(n-1)}{(n+1)^2}\quad\text{ if $n$ is odd},
    \end{cases}
    \quad\text{and}\quad
    \Mod_2(\gam{fec}) = \frac{2(n-1)}{n} = 
    \Mod_2(\gam{star})^{-1}.
\end{equation*}

In the case of edge covers, the value of the modulus splits into two cases depending on whether $n$ is even or odd. When $n$ is even, the minimum edge covers of the graph are also perfect matchings, therefore every node is covered by exactly one edge. When $n$ is odd, a minimum edge cover will have one node covered by 2 edges, and the rest covered by exactly one edge. Therefore, there will always be a ``heavier'' node in the edge cover. 

The modulus of fractional edge covers does not appear to have this same dependence on the evenness or oddness of $n$; there is a single formula for all cases. This suggests that, when $n$ is odd, the fractional edge covers using odd cycles become important. For this class of graphs, we have
\begin{equation*}
    \Mod_2(\gam{ec})\Mod_2(\gam{fec})^{-1} = 
    \begin{cases}
        1 & \text{if $n$ is even},\\
        \frac{n^2}{(n+1)^2}&\text{if $n$ is odd}.
    \end{cases}
\end{equation*}
Equation~\eqref{eq:bounds_ec_star} shows that the ratio of edge cover modulus to fractional edge cover modulus is bounded from below by $\frac{9}{16}$. The actual ratio approaches $1$ in the limit as $n\to\infty$, showing that the lower bound is not tight.

\begin{figure}
    \centering
    \includegraphics{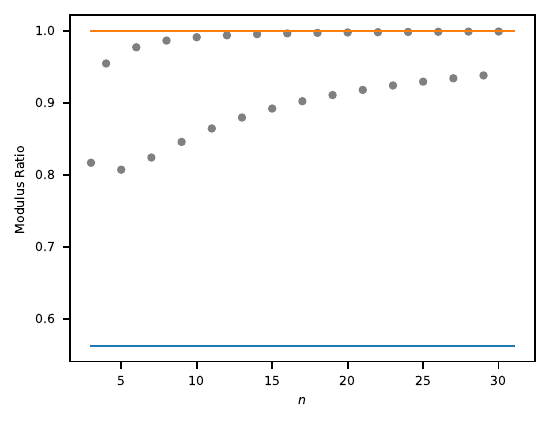}
    \caption{The value of $\Mod_2(\gam{ec})/\Mod_2(\gam{fec})$ as a function of $n$ for the $n$-barbell graph. The lines represent the upper and lower bounds in equation~\eqref{eq:bounds_ec_star}.}
    \label{fig:modulus-ratio-barbell}
\end{figure}

\subsection{Barbell graphs}
In the case of the $n$-barbell graphs (two disjoint copies of $K_n$ connected by a single edge), we see a similar behavior. Using the modulus code described in Section~\ref{sec:num-methods}, we computed $\Mod_2(\gam{ec})$ and $\Mod_2(\gam{fec})$ on $K_n$ for $n$ ranging from 3 to 30. The ratio of these two moduli is plotted in Figure~\ref{fig:modulus-ratio-barbell}. As in the complete graph example, we observe two types of behavior depending on the parity of $n$. In both cases, it appears that the ratio of moduli approaches $1$ as $n\to\infty$. However, the convergence is much faster for even $n$ than for odd. Again, this suggests that the odd cycles play an important role in $\Mod(\gam{fec})$ for odd $n$.

The difference can be further understood by considering the family of minimum edge covers on the $n$-barbell. When $n$ is even, no minimum edge cover uses the ``bridge'' of the $n$-barbell; these edge covers are built independently from edge covers of the two copies of $K_n$. From the perspective of the probabilistic interpretation, it appears that the optimal pmfs avoid using the bridge.

When $n$ is odd, however, all minimum edge covers use the bridge. If the optimal pmfs were to concentrate only on minimum edge covers, then the bridge would be ``overloaded.'' For these graphs, the optimal pmfs appear to balance betwen minimum edge covers and slightly larger edge covers that avoid the bridge. Choosing the smaller edge covers is beneficial since it tends to reduce the overlap with other edge covers; however, since these edge covers share a common edge (the bridge) it is also beneficial to choose the larger edge covers at times. An optimal pmf will balance these two competing preferences.

This can be seen in Figure~\ref{fig:barbells}. When $n$ is odd, the bridge is more likely to appear than any other edge in a random edge cover chosen by an optimal pmf (Figure~\ref{fig:barbells} top left). On the other hand, when $n$ is even, the optimal edge usage probability of the bridge is zero (Figure~\ref{fig:barbells} top right). For fractional edge covers, the bridge always has the lowest edge usage probability, followed by adjacent edges, and then all other edges.

Figure~\ref{fig:bridge-exp} compares the optimal expected edge usage, $ \mathbb{E}_{\mu^*}[\cN(\rv{\gamma}, b)]$, of the bridge, $b$, for both the edge cover and fractional edge cover modulus. While the expected usage for the fractional edge cover modulus follows a single smooth curve
\begin{equation*}
    \mathbb{E}_{\mu^*}[\cN(\rv{\gamma}, b)] = \frac{2n+2}{n^2-n+4},
\end{equation*}
the expected edge usage for edge cover modulus oscillates between two behaviors:
\begin{equation*}
    \mathbb{E}_{\mu^*}[\cN(\rv{\gamma}, b)] = 
\begin{cases}
    0 & \text{if $n$ is even},\\
    \frac{n-2}{n^2-n-1}&\text{if $n$ is odd}.
\end{cases}    
\end{equation*}
(There is a special case for $n=3$.)

From this example, we see that fractional edge cover modulus approximates edge cover modulus for large $n$. (Again, the lower bound in~\eqref{eq:bounds_ec_star} is overly pessimistic.) However, for smaller $n$, the additional flexibility in $\gam{fec}$ arising from the ability to use odd cycles, fails to capture the parity-dependence of the edge cover modulus.

\begin{figure}
    \centering
    \includegraphics[width=6in]{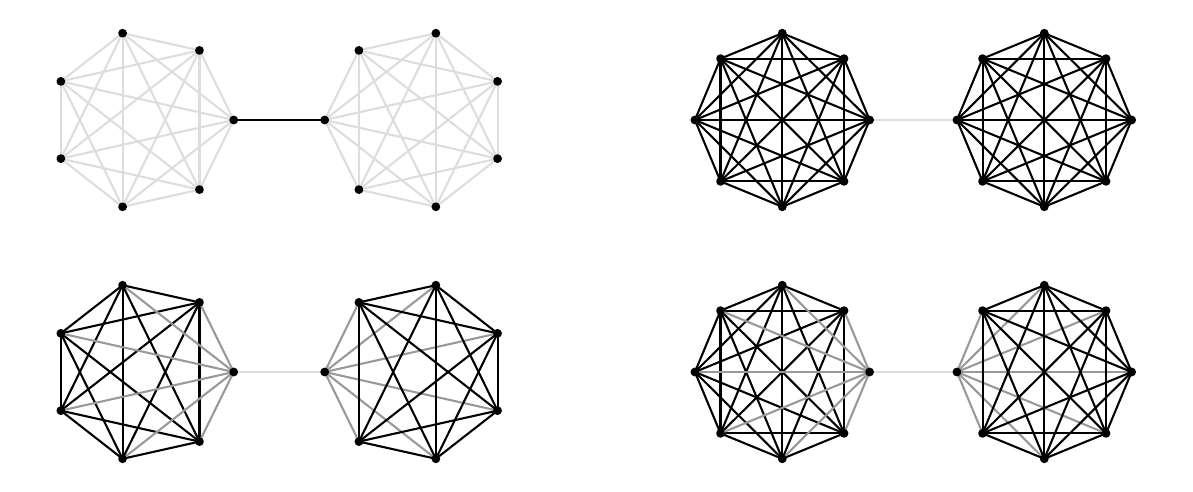}
    \caption{$n$-Barbell Graphs with $n=7$ and $n=8$. The edges are colored using the expected edge usage, $\mathbb{E}_{\mu^*}[\cN(\rv{\gamma}, e)] = \rho^*(e)/\Mod_2(\Gamma)$, for $\Gamma=\gam{ec}$ (top row) and $\Gamma=\gam{fec}$ (bottom row). Lighter colors represent smaller values.}
    \label{fig:barbells}
\end{figure}

\begin{figure}
    \centering
    \includegraphics[width=4.2in]{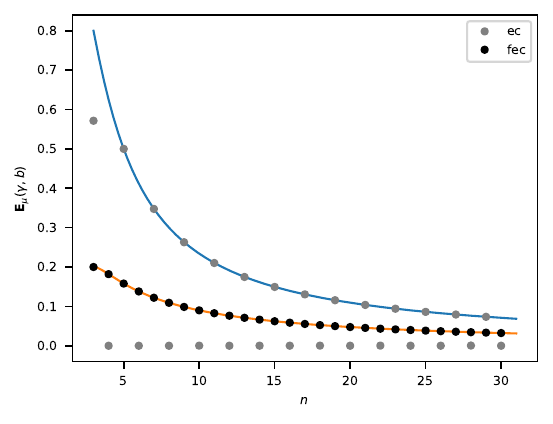}
    \caption{The graph above shows the value of $\mathbb{E}_{\mu}(\rv{\gamma}, b)$ in the $y$-axis and the value $n$ on the $x$-axis, where $b$ is the bridge connecting the two copies of $K_n$ in the $n$-barbell graph.}
    \label{fig:bridge-exp}
\end{figure}

\section{Discussion}\label{sec:discussion}

As mentioned in the introduction, one of the main motivations for studying the modulus of edge covers is to develop a deeper understanding of what properties of the underlying graph structure this modulus can expose. In this paper, we have laid the theoretical groundwork for the study of edge cover modulus. Moreover, by connecting it to the modulus of fractional edge covers and, ultimately, to that of stars, we have made it computationally feasible to approximate edge cover modulus on large graphs. In addition to deepening the connection between edge cover modulus and graph structure, there are several other interesting research directions open to pursuit.

One path involves further developing the relationship between the moduli of edge covers and fractional edge covers. For bipartite graphs, these two families are equivalent (in the sense of modulus), which gives a starting point. Moreover, if a graph only has very long odd cycles (a sort of ``nearly-bipartite'' property), then the bounds established in Theorem~\ref{thm:bounds} show that fractional edge cover modulus is a good approximation of edge cover modulus. However, the examples in Section~\ref{sec:num-examples} show that even for graphs containing triangles the two moduli can be close. In those examples, we saw that the approximation gets asymptotically better for larger graphs; the main barrier for the smaller examples seems to be a switching in the behavior of the edge cover modulus depending on whether or not the graph contains perfect matchings. The ability to use half-weight odd loops appears to give the fractional edge covers enough added flexibility to avoid this switching behavior.

Another question that arises naturally is that of the dual family to $\gam{ec}$. As shown in this paper, $\hat{\Gamma}_{\text{fec}}\modeq\gam{star}$, which leads to a computationally efficient method for computing the modulus of $\gam{fec}$. Computing the modulus of edge covers tends to be slower because of the need to repeatedly construct minimum edge covers as part of the basic algorithm. The edge covers may have a simpler dual family that could similarly aid in efficiently computing edge cover modulus.

Finally, we hope that a better understanding of the properties of the edge cover modulus will lead to similar insights into the modulus of related (and more complex) families, particularly the families of maximal matchings and perfect matchings. Both of these families have related relaxed (fractional) families that may be useful in their study just as the fractional edge covers can be used to understand edge covers.

\section*{Acknowledgements}
This material is based upon work supported by the National Science Foundation under Grant No.~2154032.

\printbibliography

\end{document}